\documentclass[11pt]{article}

\usepackage{graphicx}
\usepackage{amsmath,amssymb,amsthm,hyperref}
\usepackage{fullpage}
\usepackage{enumerate}
\usepackage{enumitem}
\usepackage{pgf,tikz}
\usetikzlibrary{patterns}
\usetikzlibrary{arrows}
\usetikzlibrary{positioning}
\usepackage{thmtools,thm-restate}

\def\COMMENT#1{}
\let\COMMENT=\footnote
\newtheorem{theorem}{Theorem}[section]

\newtheorem{lemma}[theorem]{Lemma}

\newtheorem{claim}[theorem]{Claim}

\newtheorem{question}[theorem]{Question}
\newtheorem{corollary}[theorem]{Corollary}

\newtheorem{remark}[theorem]{Remark}

\theoremstyle{definition}
\newtheorem{define}[theorem]{Definition}

\newcommand{\qedclaim}{\hfill$\blacksquare$}
\newenvironment{proofclaim}{\removelastskip\penalty55\medskip\noindent{\it Proof of the claim. }}{\medskip}

\def\marrow{{\marginpar[\hfill$\longrightarrow$]{$\longleftarrow$}}}

\def\geza#1{{\sc G\'eza says: }{\marrow\sf #1}}

\title{Matchings avoiding ordered patterns}
\author{János Barát\thanks{Supported by ERC Advanced Grants “GeoScape”, no. 882971 and “ERMiD”, no. 101054936.\\
E-mail:\protect\href{mailto:freschi.andrea@renyi.hu}{barat@renyi.hu}
}\\
\small Alfréd R\'enyi Institute of Mathematics, Budapest, Hungary, and\\
\small University of Pannonia, Department of Mathematics, Veszprém, Hungary\\
\\
Andrea Freschi\thanks{
Supported by ERC Advanced Grants “GeoScape”, no. 882971 and “ERMiD”, no. 101054936.\\
E-mail:\protect\href{mailto:freschi.andrea@renyi.hu}{freschi.andrea@renyi.hu}.}\\
\small Alfréd R\'enyi Institute of Mathematics, Budapest, Hungary,\\ 
\\
Géza Tóth\thanks{
Supported by National Research, Development and Innovation Office, NKFIH,
K-131529, ADVANCED-152590 and ERC Advanced Grant “GeoScape" 882971.
E-mail:\protect\href{mailto:freschi.andrea@renyi.hu}{geza@renyi.hu}}
\\
\small Alfréd R\'enyi Institute of Mathematics, Budapest, Hungary}

\begin{document}

\maketitle

\begin{abstract}
    A vertex-ordered graph is a graph equipped with a linear ordering of its vertices.
    A pair of independent edges in an ordered graph can exhibit one of the following three patterns: separated, nested or crossing. 
    We say a pair of independent edges is non-separated if it is either crossing or nested.
    Non-nested and non-crossing pairs are defined analogously.
    
    We are interested in the following Tur\'an-type problems: for each of the aforementioned six patterns, determine the maximum number of edges of an $n$-vertex ordered graph that does not contain a $k$-matching such that every pair of edges exhibit the fixed pattern.
    Exact answers have already been obtained for four of the six cases.
    The main objective of this paper is to investigate the two remaining open cases, namely non-separated and non-nested matchings.
    
    We determine the exact maximum number of edges of an $n$-vertex ordered graph that does not contain a non-separated $k$-matching, which has the form $\frac{3}{2}(k-1)n+\Theta(k^2)$. 
    For the non-nested case, we show the maximum number of edges lies between $(k-1)n$ and $(k-1)n+\binom{k-1}{2}$. 
    We also determine the exact maximum number of edges of an $n$-vertex ordered graph that does not contain an alternating path of given length.
    We discuss some related problems and raise several 
    conjectures.
    Furthermore, our results and conjectures yield consequences to certain Ramsey-type problems for non-nested matchings and alternating paths. 

%
\end{abstract}

\section{Introduction}

A {\it (vertex-)ordered} graph is a graph equipped with a linear ordering on its vertex set.
For simplicity, we always assume that an $n$-vertex ordered graph has vertex set $[n]:=\{1,2,\dots,n\}$ i.e., the first~$n$ positive integers.

In recent years, there has been accelerating interest on Tur\'an-type questions for ordered graphs, see e.g.~\cite{tardos}.
For a family of unordered graphs~$\mathcal H$, the {\it Tur\'an number} $ex(n,{\mathcal H})$ denotes the maximum number of edges of an $n$-vertex graph, which does not contain a copy of any $H\in {\mathcal H}$.
The celebrated Erd\H os--Stone--Simonovits theorem determines~$ex({n,\mathcal H})$ asymptotically for $\mathcal H$ fixed, provided~$\mathcal H$ contains no bipartite graph.
Analogous notions exist for ordered graphs.
We say an ordered graph $G$ contains {\it a copy of an ordered graph $H$} if there exists an injective graph isomorphism from $H$ to $G$, which preserves the order of the vertices.

\begin{define}
Given a family of ordered graphs $\mathcal H$, we write $ex_<(n,\mathcal H)$ to denote the maximum number of edges in an ordered graph with vertex set~$[n]$ not containing any copy of $H\in\mathcal H$.
\end{define}

Pach and Tardos established an analogue of the Erd\H os--Stone--Simonovits theorem in the ordered setting.
The crucial parameter here is the {\it interval chromatic number $\chi_<(H)$}, defined as the size of a smallest proper interval colouring of~$H$.

\begin{theorem}[Pach and Tardos \cite{PT06}]\label{thm:PachTardos}
For any ordered graph~$H$, we have
    $$ex_<(n,H)=\left(1-\frac{1}{\chi_<(H)-1}+o(1)\right)\frac{n^2}{2}.$$
\end{theorem}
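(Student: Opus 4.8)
The plan is to prove the Pach--Tardos ordered analogue of Erd\H os--Stone--Simonovits, which splits naturally into a lower bound (a construction) and an upper bound (a supersaturation/regularity-type argument).

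For the lower bound, I would exhibit an ordered graph on $[n]$ with $\left(1-\tfrac{1}{\chi_<(H)-1}\right)\tfrac{n^2}{2}-o(n^2)$ edges that is $H$-free. Set $r:=\chi_<(H)-1$ and partition $[n]$ into $r$ consecutive intervals $I_1<I_2<\dots<I_r$, each of size roughly $n/r$. Take $G$ to be the complete ordered $r$-partite graph with these parts, i.e. put every edge between distinct intervals and no edge inside an interval. A proper interval colouring of $H$ with fewer than $\chi_<(H)$ colours does not exist, so any order-preserving embedding of $H$ into this blow-up would have to place two adjacent vertices of $H$ in a common interval $I_j$; but vertices inside $I_j$ span no edges, so $H$ cannot embed. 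The edge count is $\binom{n}{2}-r\binom{n/r}{2}=\left(1-\tfrac{1}{r}\right)\tfrac{n^2}{2}-o(n^2)$, giving the claimed lower bound.

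For the upper bound I would use a two-step embedding scheme that mirrors the classical proof. First I would establish an \emph{ordered removal/supersaturation} principle: if an $n$-vertex ordered graph $G$ has at least $\left(1-\tfrac{1}{\chi_<(H)-1}+\eps\right)\tfrac{n^2}{2}$ edges, then it contains many copies of a complete ordered $\chi_<(H)$-partite ordered graph $K^{<}$ with large, order-respecting parts. The natural engine here is an ordered version of the Kővári--Sós--Turán / Erd\H os--Simonovits supersaturation bound: an ordered graph that is dense forces a complete ordered multipartite subgraph whose parts are themselves long intervals (or can be thinned to respect the vertex order). Second, since $H$ admits a proper interval colouring with $\chi_<(H)$ colours by definition, $H$ embeds order-preservingly into any such complete ordered $\chi_<(H)$-partite graph with sufficiently large interval parts: colour class $i$ of $H$ maps into the $i$-th interval part, adjacency is automatic because the parts are completely joined, and the ordering is respected because the colour classes and the parts are both intervals. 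Combining the two steps shows that density above the stated threshold forces a copy of $H$, which yields $ex_<(n,H)\le\left(1-\tfrac{1}{\chi_<(H)-1}+o(1)\right)\tfrac{n^2}{2}$.

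The main obstacle is the upper bound, specifically extracting a complete ordered multipartite structure whose parts are intervals compatible with the target colouring of $H$; in the unordered setting one simply applies Erd\H os--Simonovits, but here one must control the \emph{positions} of the embedded vertices so that the interval colouring of $H$ can be honoured. I expect this to require an ordered regularity lemma or a direct counting argument that first finds a dense reduced structure and then shows that, after passing to long sub-intervals, the density guarantees the required completely-joined interval parts; the bipartite (degenerate) case $\chi_<(H)=2$ must be handled separately, where the leading term vanishes and the bound reads $o(n^2)$, following from the ordered Kővári--Sós--Turán estimate. The lower-order error term $o(1)$ absorbs the losses incurred in the regularity cleaning and in thinning parts to intervals, so no sharp constants need to be tracked.
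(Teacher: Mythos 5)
The paper itself offers no proof of this statement --- it is quoted as a known theorem of Pach and Tardos \cite{PT06} --- so your proposal can only be judged against what a complete argument requires, not against an argument in the paper. Your lower bound is correct and complete: with $r:=\chi_<(H)-1$, the complete ordered $r$-partite graph with consecutive interval parts is $H$-free, since the preimages of the parts under any order-preserving embedding would be independent sets that are also intervals of $H$, giving a proper interval colouring of $H$ with fewer than $\chi_<(H)$ colours; the edge count is as you compute. This is the standard construction.

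The upper bound, which is the entire content of the theorem, has a genuine gap. Everything rests on your ``ordered removal/supersaturation principle'': that density $1-\frac{1}{\chi_<(H)-1}+\eps$ forces, with $p:=\chi_<(H)$, sets $A_1<A_2<\dots<A_p$ of prescribed size, pairwise completely joined (being \emph{separated} suffices; they need not be intervals). Your second step --- embedding $H$ into such a structure via its interval colouring --- is fine, but the principle itself is never proved; you yourself defer it as ``the main obstacle''. Worse, the concrete route you sketch --- find a reduced structure of pairwise dense interval pairs, then ``thin'' to completely joined parts --- fails as stated once $p\ge3$: pairwise density does not force even a single transversal triangle. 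For instance, split $V_1=A\cup B$, $V_2=C\cup D$, $V_3=E\cup F$ into halves and take the complete bipartite graphs between $A$ and $C$, $B$ and $D$, $C$ and $F$, $D$ and $E$, $A$ and $E$, $B$ and $F$: all three bipartite densities equal $\tfrac{1}{2}$, yet there is no transversal triangle, so no thinning can produce the desired tripartite structure. Hence a dense reduced structure alone cannot finish the proof; one needs genuinely more, e.g.\ Szemer\'edi's regularity lemma applied with a partition refining an interval partition (so that parts lying in distinct intervals are automatically separated) together with a counting/embedding lemma for dense regular $p$-tuples, or an Erd\H{o}s--Stone-style induction on $p$ with shrinking part sizes via K\H{o}v\'ari--S\'os--Tur\'an/dependent random choice. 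The same issue infects your degenerate case $\chi_<(H)=2$: classical K\H{o}v\'ari--S\'os--Tur\'an gives a large complete bipartite subgraph, but not one with separated sides; an extra (easy, e.g.\ dyadic) argument is needed and is likewise not supplied. Until one of these routes is carried out, the proposal establishes only the easy half of the theorem.
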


As in the unordered setting, Theorem~\ref{thm:PachTardos} does not provide an asymptotic formula when $\chi_<(H)=2$, or $|V(H)|$ and $n$ are comparable in size.
There has been great interest in studying the $\chi_<(H)=2$ case, sometimes assuming further that the underlying ordered graph can be partitioned into two independent intervals. 
This setting naturally corresponds to $(0,1)$-matrices.
The extremal theory of $(0,1)$-matrices include fundamental results by Füredi-Hajnal~\cite{fh}
and Marcus--Tardos solving the Stanley-Wilf conjecture~\cite{mt}.

In this paper, we are interested in Tur\'an-type problems for ordered matchings.
In order to formulate these questions precisely, we introduce some notation.
Let $\mathbb N$ denote the set of positive integers.
For every~$a,b\in\mathbb N$ with $a\le b$, we write $[a,b]:=\{a,a+1,\dots,b\}$ for the set of integers between~$a$ and~$b$ (included).
We call such a set an {\it interval} and the {\it size} of an interval is the number of integers in it. 

\begin{define}[Separated, nested and crossing edges]
    Let~$xy$ and~$uv$ be two independent (i.e., vertex-disjoint) edges with $x,y,u,v\in\mathbb N$ such that~$x<y$ and~$u<v$.
    We say~$xy$ and~$uv$ are
    \begin{itemize}
        \item {\it separated} if the intervals $[x,y]$ and $[u,v]$ are disjoint,
        \item {\it nested} if the interval $[x,y]$ contains $[u,v]$ or vice versa, and
       \item {\it crossing} otherwise.
    \end{itemize}
    See Figure~\ref{fig:patterns} for a drawing of these patterns.
\end{define}

\begin{figure}[h!]
\centering
\begin{tikzpicture}
\draw[ultra thick] (6,0) arc (180:0:0.6);
\draw[ultra thick] (6.6,0) arc (180:0:0.6);

\draw[ultra thick] (3,0) + (0,0) arc (180:0:0.9);
\draw[ultra thick] (3,0) + (0.6,0) arc (180:0:0.3);

\draw[ultra thick] (0,0) + (0,0) arc (180:0:0.3);
\draw[ultra thick] (0,0) + (1.2,0) arc (180:0:0.3);

\end{tikzpicture}
\caption{From left to right: separated, nested and crossing pairs.}
\label{fig:patterns}
\end{figure}
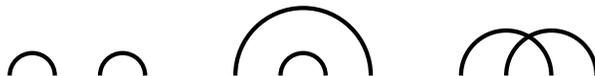

\begin{define}[$k$-matchings with special patterns]
    A {\it matching} is a collection of independent edges.
    A matching consisting of $k\in\mathbb N$ edges is called a {\it $k$-matching}.
    For any matching whose vertices are positive integers, we say such matching is:
    \begin{itemize}[itemsep=0em]
        \item {\it separated} if every pair of its edges are separated,
        \item {\it nested} if every pair of its edges are nested,
        \item {\it crossing} if every pair of its edges are crossing,
        \item {\it non-separated} if every pair of its edges are not separated,
        \item {\it non-nested} if every pair of its edges are not nested,
        \item {\it non-crossing} if every pair of its edges are not crossing.
    \end{itemize}
    As a convention, we denote a separated (resp. nested and crossing) $k$-matching as $\text{Sep-}\mathcal M_k$ (resp. $\text{Nest-}\mathcal M_k$ and $\text{Cross-}\mathcal M_k$).
    Similarly, we denote a non-separated (resp. non-nested and non-crossing) $k$-matching as $\neg\text{Sep-}\mathcal M_k$ (resp. $\neg\text{Nest-}\mathcal M_k$ and $\neg\text{Cross-}\mathcal M_k$).
\end{define}

Tur\'an-type problems for these special matchings have been previously considered in the literature, although not systematically, for example under a different terminology or setting.
The exact values of $ex_<(n,\,\cdot\,)$ for separated, nested, crossing and non-crossing $k$-matchings have been determined for all values of~$k$ and~$n$.
See the next subsection for a formal presentation of these results.

The main goal of this paper is to study the remaining two cases: non-separated and non-nested matchings.
A summary of all exact results, or best bounds, can be found in Table~\ref{table:1}.
This includes our new results for non-separated and non-nested matchings.

\begin{table}[h]
\centering
\bgroup
\def\arraystretch{1.75}
\begin{tabular}{|c|c|c|}
\hline
$k$-matching  & Maximum number of edges in~$[n]$ without a $k$-matching & References  \\
\hline
\hline
Separated  & 
    $|E(T(n+1,k))|-k+1$ & \cite{layout}\\
Nested  & $2(k-1)n - (k-1)(2k - 1)$ & \cite{layout}\\
Crossing  & $2(k-1)n-(k-1)(2k - 1)$ & \cite{CapoyleasP}\\
Non-Separated  & $\frac{3}{2}(k-1)n-\frac{1}{2}(2k-1-\{n+1\}_k)(k-1+\{n+1\}_k)$ & New, see Theorem~\ref{thm:nonsep}\\
Non-Nested  & $\in[(k-1)n,(k-1)n+\binom{k-1}{2}]$ & New, see Theorem~\ref{thm:non-nested}\\
Non-crossing   & $(k-1)n$ & \cite{Kupitz}\\
\hline
\end{tabular}
\egroup
\caption{The values in the table hold for every $n,k\in\mathbb N$ with $n\ge 2k$.
Here $T(n+1,k)$ denotes the $k$-partite Tur\'an graph on $n+1$ vertices and $\{n+1\}_k$ denotes the residue of $n+1$ divided by $k$. }
\label{table:1}
\end{table}

We also obtain sharp results for the Tur\'an number of the alternating path (see Theorem~\ref{thm:altpath}) and for $ex_<(n,\text{Cross-}\mathcal M_k,\text{Sep-}\mathcal M_k)$ (i.e., when we forbid a crossing {\it and} a separated $k$-matching).

Recently, there has been a lot of research on ordered Ramsey numbers \cite{bckk,cfls,tardos}, where a monochromatic copy of a small ordered target graph is searched in a large 2-edge-coloured large host graph.
We show that Tur\'an-type results for alternating paths and non-nested matchings can yield better bounds for the analogous Ramsey-type problems.



\subsection{Previous work on ordered matchings}
A {\em convex geometric graph} is a graph drawn in the plane such that its vertices are in convex position and the edges are straight line segments.
Observe  that $ex_<(n,\text{Cross-}\mathcal M_k)$ is equal to the maximum number of edges in a geometric convex graph that does not contain $k$ pairwise crossing independent edges. 
Similarly, $ex_<(n,\neg\text{Cross-}\mathcal M_k)$ is the maximum number of edges in a geometric convex graph that  
does not contain $k$ pairwise non-crossing independent edges.
These functions have already been determined.

\begin{theorem}[Kupitz \cite{Kupitz}]
For every $n,k\in\mathbb N$ with $n\ge 2k$, we have 
$$ex_<(n,\neg\text{Cross-}\mathcal M_k)=(k-1)n.$$    
\end{theorem}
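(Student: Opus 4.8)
The plan is to establish the two inequalities $ex_<(n,\neg\text{Cross-}\mathcal M_k)\ge (k-1)n$ and $ex_<(n,\neg\text{Cross-}\mathcal M_k)\le (k-1)n$ separately. Throughout I would work in the convex-geometric picture: place $[n]$ on a circle and draw each edge as a chord, so that two independent edges are non-crossing (that is, separated or nested) precisely when their chords do not cross. A $\neg\text{Cross-}\mathcal M_k$ is then a set of $k$ pairwise non-crossing chords, i.e.\ a planar matching of size $k$, and the theorem asks for the largest chord system whose maximum planar matching has size at most $k-1$; write $\nu(G)$ for this maximum non-crossing matching number. The key object is a \emph{convex thrackle}: a set of chords in which every two independent chords cross. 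Two facts drive everything. First, a convex thrackle on $n$ points has at most $n$ edges; I would take this classical bound as the base case $k=2$, since $\nu(G)\le 1$ means exactly that $G$ is a thrackle. Second, any non-crossing matching meets a thrackle in at most one edge, because two edges of a thrackle either share a vertex or cross. Hence if $E(G)$ is covered by $t$ thrackles, then $\nu(G)\le t$ and the thrackle bound yields $|E(G)|\le tn$.

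For the upper bound I would prove the converse: if $\nu(G)\le k-1$, then $E(G)$ can be covered by $k-1$ convex thrackles, so $|E(G)|\le (k-1)n$. Equivalently, the minimum number of thrackles covering $E(G)$ equals $\nu(G)$; one inequality is the easy observation above, and the content is the reverse, min--max (Dilworth-type) direction. The natural attempt is to layer the edges, assigning to each edge $e$ the largest size of a non-crossing matching ``ending'' at $e$ and hoping each layer is a thrackle. \textbf{The main obstacle} is that the non-crossing relation is not transitive---a separated pair followed by a nested pair can produce a crossing pair---so these layers need not be thrackles; indeed the abstract conflict graph on edges (adjacency $=$ independent and non-crossing) is the complement of a circle graph and so contains induced $C_5$'s and is not perfect. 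Consequently the decomposition cannot follow from perfect-graph theory alone, and one must exploit the concrete positions of the chords. Establishing this geometric thrackle-cover theorem is the crux of the whole argument; once it is in place the edge bound is immediate.

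For the matching lower bound I would exhibit a graph with $(k-1)n$ edges and $\nu=k-1$. The conceptually cleanest description is as a union of $k-1$ pairwise edge-disjoint spanning convex thrackles, each with $n$ edges: a non-crossing matching uses at most one edge from each thrackle, so the union has $\nu\le k-1$, while edge-disjointness makes the edge count exactly $(k-1)n$. A more hands-on alternative, which I would verify directly, is to make the $k-1$ leftmost vertices universal; this alone contributes $(k-1)n-\binom{k}{2}$ edges and a vertex cover of size $k-1$, forcing every matching to have fewer than $k$ edges, and one then adds a further $\binom{k}{2}$ edges among the remaining vertices forming a crossing family positioned to cross the relevant universal edges. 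Since a crossing family contributes at most one edge to any non-crossing matching and cannot combine with $k-1$ universal edges into a non-crossing $k$-matching, $\nu$ stays at $k-1$ while the edge count reaches $(k-1)n$. I expect this lower-bound construction to be the easier half, with the geometric thrackle-cover theorem underlying the upper bound being the real difficulty.
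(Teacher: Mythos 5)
Note first that the paper itself does not prove this statement: it is quoted as Kupitz's theorem with a citation to \cite{Kupitz}, so your proposal has to stand on its own. Unfortunately, the step you correctly identify as the crux of your upper bound --- the min--max claim that a convex geometric graph with no non-crossing $k$-matching can be covered by $k-1$ convex thrackles --- is false, and this is not a gap that ``exploiting the concrete positions of the chords'' can close, because the induced $C_5$ obstruction you mention is itself realizable by chords. Take $n=6$ and the five edges $e_1=\{1,2\}$, $e_2=\{2,3\}$, $e_3=\{3,5\}$, $e_4=\{4,6\}$, $e_5=\{1,6\}$: consecutive pairs (cyclically) share a vertex or cross, while all non-consecutive pairs are independent and non-crossing, so the conflict structure is exactly a $5$-cycle. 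Hence the largest non-crossing matching here has $2$ edges, yet these five edges cannot be covered by $2$ thrackles (thrackles are cliques of the conflict graph, and $C_5$ has no clique cover of size $2$). Worse, this gadget sits inside the natural extremal graph: for $k=3$ take all edges incident to $1$ and $2$ together with $\{3,n-1\},\{3,n\},\{4,n\}$; this graph has $2n$ edges and no non-crossing $3$-matching, but $\{2,3\}$ and $\{1,n\}$ must lie in different thrackles, $\{3,n-1\}$ meets neither $\{1,n\}$ nor $\{4,n\}$ while $\{4,n\}$ does not meet $\{2,3\}$, and then $\{1,2\}$ meets neither $\{3,n-1\}$ nor $\{4,n\}$, so it fits in no thrackle of a putative $2$-cover. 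So the cover number strictly exceeds the matching number exactly on the graphs the theorem is about, and the whole upper-bound strategy collapses; moreover, placing disjoint translates of the gadget in separated intervals shows the cover number can be $3/2$ times the matching number, so no cover-based argument can recover the tight constant $k-1$.

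The lower bound also has genuine problems. For the thrackle-packing version, $k-1$ pairwise edge-disjoint spanning convex thrackles need not exist: one can check that on $6$ convex points there is no pair of edge-disjoint $6$-edge thrackles (any partition of $12$ of the $15$ edges of $K_6$ into two thrackles would have to avoid all odd cycles of the non-meeting relation, and the five non-crossing perfect matchings of $[6]$ together with the $C_5$-gadgets above cannot all be destroyed by deleting only $3$ edges), so the construction fails already at $n=2k$ with $k=3$. For the universal-vertex version, the completion you describe is impossible for $k\ge4$: with $[k-1]$ universal, any edge $\{x,y\}\subseteq[k,n]$ with $y-x\le n-2k+1$ \emph{does} extend to a non-crossing $k$-matching (take $b$ pairwise nested universal edges $\{1,n\},\dots,\{b,n-b+1\}$ containing it and $k-1-b$ pairwise nested universal edges packed into $[\,b+1,x-1\,]$, for suitable $b$), so the $\binom{k}{2}$ added edges are forced to be exactly those with $x\ge k$ and $y-x\ge n-2k+2$ --- and this set contains nested pairs such as $\{k,n\}\supset\{k+1,n-1\}$, hence is \emph{not} a crossing family. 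That forced construction does in fact work, but proving that no combination of several of these long edges with universal edges yields a non-crossing $k$-matching is precisely the case analysis that your phrase ``positioned to cross the relevant universal edges'' skips; as written, both halves of the proposal have unfixable or unproven cores.
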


\begin{theorem}[Capoyleas and Pach \cite{CapoyleasP}]\label{thm:crossing}
For every $n,k\in\mathbb N$ with $n\ge 2k$, we have
$$ex_<(n,\text{Cross-}\mathcal M_k)=2(k-1)n-(2k-1)(k-1).$$    
\end{theorem}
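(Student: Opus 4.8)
The plan is to prove matching lower and upper bounds. For the lower bound I will exhibit an explicit ordered graph on $[n]$ with exactly $2(k-1)n-(2k-1)(k-1)$ edges and no crossing $k$-matching; for the upper bound I will show that every crossing-$k$-matching-free ordered graph on $[n]$ is $2(k-1)$-degenerate and then invoke the elementary fact that a $d$-degenerate graph on $n\ge d+1$ vertices has at most $dn-\binom{d+1}{2}$ edges. The two bounds meet because, with $d=2(k-1)$, one has $\binom{d+1}{2}=\binom{2k-1}{2}=(2k-1)(k-1)$; this coincidence is what pins down the exact additive constant, and it is the reason to route the upper bound through degeneracy rather than through a cruder edge count.

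For the construction, let $G$ consist of all edges $\{i,j\}$ with $i<j$ such that either $i\le k-1$ (a ``fan'' from each of the first $k-1$ vertices) or $i\ge k$ and $j-i\le k-1$ (a ``band'' of short chords). Partitioning by the left endpoint, a direct count gives $\sum_{i=1}^{k-1}(n-i)+\sum_{d=1}^{k-1}(n-k+1-d)=(k-1)(2n-2k+1)$ edges, exactly the claimed value. To verify that $G$ has no crossing $k$-matching, recall that a crossing family is precisely a set of edges $\{a_1,b_1\},\dots,\{a_m,b_m\}$ with $a_1<\cdots<a_m<b_1<\cdots<b_m$. Consider the edge with the largest left endpoint $a_m$. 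If it is a fan edge then $a_m\le k-1$, forcing every $a_\ell\le k-1$, and since there are only $k-1$ admissible left-endpoint values we get $m\le k-1$. If instead it is a band edge then $b_m\le a_m+(k-1)$, so the $m$ distinct values $b_1<\cdots<b_m$ all lie strictly above $a_m$ and at most $a_m+k-1$, which again forces $m\le k-1$. Either way $m<k$.

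For the upper bound, note that ``containing no crossing $k$-matching'' is a hereditary property, so it suffices to prove the following \emph{min-degree lemma}: every ordered graph on at least $2k$ vertices with no crossing $k$-matching has a vertex of degree at most $2(k-1)$. Granting this, every induced subgraph of a crossing-$k$-matching-free graph either has fewer than $2k$ vertices, and then trivially a vertex of degree at most $2k-2$, or contains such a vertex by the lemma; hence the graph is $2(k-1)$-degenerate and the degeneracy edge bound completes the argument.

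It remains to prove the \emph{min-degree lemma}, which I regard as the main obstacle. I would argue by contradiction: assuming minimum degree at least $2k-1$, I would produce a crossing $k$-matching. Fix a crossing family $F=\{\{a_1,b_1\},\dots,\{a_t,b_t\}\}$ of maximum size $t$, with $a_1<\cdots<a_t<b_1<\cdots<b_t$, and suppose $t\le k-1$. A short case analysis shows that an edge $\{x,y\}$ crosses every member of $F$ if and only if one endpoint lies in the inner set $M=\{x:\,a_t<x<b_1\}$ and the other in the outer set $O=\{x:\,x<a_1\ \text{or}\ x>b_t\}$; by maximality of $F$, the graph therefore has no edge between $M$ and $O$. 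Consequently every vertex of $M$ has all its neighbours inside the interval $[a_1,b_t]$, and one would like to recurse on the strictly smaller crossing-$k$-matching-free subgraph induced there to locate a low-degree vertex in the interior, whose degree in the whole graph is unchanged. The delicate points—guaranteeing that $O$ is nonempty so that the recursion genuinely shrinks, and controlling the degrees of the boundary vertices $a_i,b_i$, which may acquire neighbours in $O$—are exactly where the real work lies and constitute the heart of the Capoyleas--Pach argument. A successful resolution yields the lemma, and with it the exact formula.
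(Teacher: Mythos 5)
The paper itself contains no proof of this statement: it is quoted as a known theorem of Capoyleas and Pach \cite{CapoyleasP}, so your proposal has to stand entirely on its own. Its lower bound does stand: the fan-plus-band construction has exactly $2(k-1)n-(2k-1)(k-1)$ edges, and your largest-left-endpoint argument correctly caps every pairwise crossing family at $k-1$ edges. The upper bound, however, collapses, because the \emph{min-degree lemma} you defer (``every ordered graph on at least $2k$ vertices with no crossing $k$-matching has a vertex of degree at most $2(k-1)$'') is not merely the hard part of the argument --- it is false for every $k\ge 3$.

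Here is a counterexample. In a crossing $k$-matching, written $a_1<\dots<a_k<b_1<\dots<b_k$ with edges $a_ib_i$, each edge has $k-1$ of the other endpoints strictly inside $[a_i,b_i]$ and $k-1$ strictly outside it, so every edge satisfies both $b_i-a_i\ge k$ and $n-(b_i-a_i)\ge k$; consequently, edges of \emph{cyclic length} $\min(|x-y|,\,n-|x-y|)\le k-1$ can never appear in any crossing $k$-matching. Now take $n=4m$ with $m\ge k$, and let $G$ consist of (i) all edges of cyclic length at most $k-1$, and (ii) the perfect matching $L=\bigl\{\{i,\,3m{+}1{-}i\}:1\le i\le m\bigr\}\cup\bigl\{\{m{+}j,\,4m{+}1{-}j\}:1\le j\le m\bigr\}$, all of whose edges have cyclic length at least $m+1\ge k$. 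Each of the two halves of $L$ is pairwise nested, so any pairwise crossing subset of $L$ has at most $2$ edges; and since a crossing $k$-matching could only use edges of $L$, the graph $G$ has no crossing $k$-matching when $k\ge3$. Yet every vertex of $G$ has degree exactly $2(k-1)+1=2k-1$. Worse, replacing $L$ by the union of $k-1$ nested perfect matchings between antipodal arcs (partition $[n]$ into $2(k-1)$ arcs) gives minimum degree $3(k-1)$ while still forbidding a crossing $k$-matching, so this class of graphs is not even $(3k-4)$-degenerate: no degeneracy argument can recover the leading coefficient $2(k-1)$, let alone the exact additive constant. (Your route is sound for $k=2$, where these graphs are precisely the outerplanar ones --- which is what makes the lemma look plausible.) A smaller point: your ``if and only if'' characterization of the edges crossing all of a maximal family $F$ is also incorrect --- an edge with one endpoint in $(a_1,a_2)$ and the other in $(b_1,b_2)$ crosses every member of $F$ without meeting $M\cup O$ --- although you only use the true direction of it. The real proofs of this theorem proceed by genuinely different, more global arguments, and the counterexample above shows why vertex-peeling cannot be made to work.
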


The functions $ex_<(n,\text{Sep-}\mathcal M_k)$ and $ex_<(n,\text{Nest-}\mathcal M_k)$ have also been considered, under different terminology.
A {\it stack} (resp. {\it queue} and {\it arch}) is a set of edges that are non-crossing (resp.non-nested and non-separated).
A {\it $k$-stack layout} of an ordered graph~$G$, is a partition of the edge set $E(G)$ into $k$ stacks; the notions of {\it $k$-queue layout} and {\it $k$-arch layout} are defined analogously.
In particular,~$G$ admits a $k$-queue layout (resp. a $k$-arch layout) if and only if it does not contain a nested (resp. separated) $(k+1)$-matching, see e.g.~\cite[Lemmata~1 and~2]{layout}. 
Dujmovi\'c and Wood \cite{layout} determined the maximum number of edges in an ordered graph with a $k$-queue layout or a $k$-arch layout~\cite[Lemmata~8 and~10]{layout}, thus obtaining the following.

\begin{theorem}[Dujmovi\'c and Wood~\cite{layout}]\label{thm:nested}
For every $n,k\in\mathbb N$ with $n\ge 2k$, we have
$$ex_<(n,\text{Nest-}\mathcal M_k)=2(k-1)n - (k-1)(2k - 1).$$    
\end{theorem}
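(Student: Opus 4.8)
The plan is to prove the claimed value as two matching bounds, the lower bound by an explicit construction and the upper bound by a layering argument. Write $q=k-1$ throughout. For the lower bound I would take the \emph{band graph} $G^\ast$ on $[n]$ in which $i\sim j$ exactly when $1\le|i-j|\le 2q$. Grouping edges by their span $d=|i-j|$ gives
\[
|E(G^\ast)|=\sum_{d=1}^{2q}(n-d)=2qn-\binom{2q+1}{2}=2(k-1)n-(k-1)(2k-1),
\]
where the range $1\le d\le 2q$ is legitimate because $2q\le n-2$, guaranteed by $n\ge 2k$. To see that $G^\ast$ has no $\text{Nest-}\mathcal M_k$, observe that the outermost edge $\{x,y\}$ of any nested $k$-matching would strictly enclose the $2(k-1)$ distinct endpoints of the remaining $k-1$ edges, forcing $y-x\ge 2k-1>2q$, which exceeds every span occurring in $G^\ast$. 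Hence $ex_<(n,\text{Nest-}\mathcal M_k)\ge 2(k-1)n-(k-1)(2k-1)$.

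For the upper bound I would use a Mirsky-type layering refined by a span estimate. Given an ordered graph $G$ on $[n]$ with no nested $k$-matching, assign to each edge $e$ its \emph{height} $h(e)$, defined as the maximum size of a nested matching whose largest (outermost) edge is $e$; by hypothesis $1\le h(e)\le k-1$. Setting $L_\ell=\{e:h(e)=\ell\}$ yields a partition $E(G)=L_1\sqcup\cdots\sqcup L_{k-1}$ with two properties. First, each $L_\ell$ contains no two edges that are independent and nested: if $e\subsetneq e'$ were such a pair in $L_\ell$, then prepending $e'$ to a size-$\ell$ nested matching topped by $e$ would exhibit height $\ell+1$ for $e'$, a contradiction. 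Second, every $e\in L_\ell$ has span at least $2\ell-1$, since the $\ell-1$ edges nested strictly inside $e$ contribute $2(\ell-1)$ distinct interior points.

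It then remains to bound a family of edges that is both non-nesting and long. I would isolate the sub-lemma: if $F$ is a set of edges of $[n]$ containing no independent nested pair, with every edge of span at least $s$, then $|F|\le 2n-2s-1$. To prove it, order the edges of $F$ by left endpoint, breaking ties by right endpoint; non-nestedness forces that $l<l'$ implies $r\le r'$, so along this order the right endpoints are also non-decreasing, and consecutive distinct edges have strictly increasing endpoint-sum $l+r$. Since each such sum lies in the integer interval $[s+2,\,2n-s]$, which contains $2n-2s-1$ values, the bound follows. Applying this with $s=2\ell-1$ gives $|L_\ell|\le 2n-4\ell+1$, and summing,
\[
|E(G)|=\sum_{\ell=1}^{k-1}|L_\ell|\le\sum_{\ell=1}^{k-1}(2n-4\ell+1)=2(k-1)n-(k-1)(2k-1),
\]
which matches the construction and completes the proof.

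The step I expect to be the main obstacle is the span refinement in the second property together with its use in the sub-lemma. The naive estimate that each layer $L_\ell$ is merely a non-nesting family (a single queue) gives only $|L_\ell|\le 2n-3$ and hence the bound $(k-1)(2n-3)$, which is strictly too weak for $k\ge 3$ since $(k-1)(2n-3)>2(k-1)n-(k-1)(2k-1)$. The whole argument succeeds precisely because deeper layers consist only of long edges, so the sharper sub-lemma with $s=2\ell-1$ recovers exactly the savings needed; identifying and exploiting this monotonicity of span with height is the crux.
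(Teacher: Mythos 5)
Your proof is correct, and its upper bound goes by a genuinely different route than the paper's. The lower bound is the same: the paper also uses the band graph of all edges of span at most $2k-2$. For the upper bound, however, the paper (which attributes Theorem~\ref{thm:nested} to Dujmovi\'c and Wood, and whose own closely analogous argument is the proof of Theorem~\ref{thm:altpath}, noted there to work for nested matchings with minor modifications) proceeds by iterative peeling: in each round every vertex discards its extremal incident edge(s), alternating between leftmost and rightmost choices; if any edge survived all $k-1$ rounds one could greedily back-construct a forbidden nested configuration, and the bound follows from counting at most $n-i-1$ deletions in round $i$ --- equivalently, from the resulting fact that every vertex has at most $2k-2$ neighbours on each side. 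You instead build a static, Mirsky-type partition of $E(G)$ into layers $L_1,\dots,L_{k-1}$ by the height $h(e)$, verify that each $L_\ell$ is a queue (no independent nested pair) all of whose edges have span at least $2\ell-1$, and count each layer via the strictly increasing endpoint-sum injection into $[s+2,\,2n-s]$, giving $|L_\ell|\le 2n-4\ell+1$; all of these steps check out (in particular the tie-breaking in the sub-lemma does force strictly increasing sums $l+r$, and incident pairs, which may share an endpoint, cause no trouble since only independent nested pairs are excluded). The two arguments are both layer decompositions in spirit, but the key quantitative tools differ: the paper's per-round deletion accounting versus your sub-lemma that a queue of edges of span at least $s$ has at most $2n-2s-1$ edges, which sharpens the classical $2n-3$ queue bound and, as you rightly identify, is exactly what recovers the constant $-(k-1)(2k-1)$. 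What each buys: the paper's peeling transfers directly to alternating paths (the greedy reconstruction naturally builds a path, which is how Theorem~\ref{thm:altpath} is proved), whereas your height decomposition is arguably more transparent for matchings, gives per-layer structural information, and isolates a clean, reusable counting lemma.
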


In the next result, $T(n+1,k)$ denotes the $k$-partite Tur\'an graph on $n+1$ vertices, that is, the complete $k$-partite graph with classes of size as equal as possible.

\begin{theorem}[Dujmovi\'c and Wood~\cite{layout}]\label{dwsepar}
For every $n,k\in\mathbb N$ with $n\ge 2k$, we have
$$ex_<(n,\text{Sep-}\mathcal M_k)=e(T(n+1,k))-k+1.$$
\end{theorem}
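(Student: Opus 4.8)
The plan is to translate the problem into the language of interval systems. I would identify each edge $xy$ (with $x<y$) of an ordered graph with the integer interval $[x,y]$; two independent edges are separated precisely when the corresponding intervals are disjoint, and two disjoint integer intervals can never share an endpoint, so a $\text{Sep-}\mathcal M_k$ is exactly a family of $k$ pairwise disjoint intervals. Thus $G$ avoids $\text{Sep-}\mathcal M_k$ if and only if the maximum number $\nu$ of pairwise disjoint edge-intervals is at most $k-1$. The key classical fact I would invoke is the piercing/packing duality for intervals: the minimum number $\tau$ of points meeting every interval equals $\nu$. The inequality $\tau\le\nu$ follows from the standard greedy argument (repeatedly pick the interval with smallest right endpoint, place a point there, and delete every interval it meets; the picked intervals are pairwise disjoint and their number equals the number of points placed), while $\tau\ge\nu$ is immediate. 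Hence avoiding $\text{Sep-}\mathcal M_k$ is equivalent to every edge of $G$ passing through one of some fixed set of $k-1$ points.

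For the lower bound I would exhibit the extremal graph directly. Choose $k-1$ \emph{hub} vertices so that the remaining $n-k+1$ vertices are split by them into $k$ blocks of sizes as equal as possible, and let $G$ consist of every edge whose interval contains a hub, i.e.\ all edges except those with both endpoints inside a common block. Since the $k-1$ hubs pierce all edges, no $k$ edges can be pairwise disjoint, so $G$ contains no $\text{Sep-}\mathcal M_k$. Counting gives $\binom n2-\sum_i\binom{a_i}2$, where the $a_i$ are the balanced block sizes summing to $n-k+1$.

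For the upper bound, given any $G$ avoiding $\text{Sep-}\mathcal M_k$, the duality produces $k-1$ points piercing all edges. A point lying in the gap between consecutive vertices $i$ and $i+1$ pierces only a subset of the edges pierced by the vertex $i$, so replacing each gap-point by an adjacent vertex keeps the set piercing; I may therefore assume all $k-1$ piercing points are vertices. The edges of $G$ are then confined to those whose interval contains a hub, so their number is at most $\binom n2-\sum_i\binom{a_i}2$ for the block sizes determined by the chosen hubs. Since the subtracted term $\sum_i\binom{a_i}2$ is minimised exactly when the blocks are balanced, this yields a bound that matches the construction.

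The only remaining work is bookkeeping: showing that the balanced quantity $\binom n2-\sum_i\binom{a_i}2$ equals $e(T(n+1,k))-k+1$. This is where the slightly surprising appearance of $n+1$ (rather than $n$) enters, and it is the step I expect to require the most care. Writing $n+1=qk+r$ with $0\le r<k$, the balanced partition of $n+1$ into $k$ parts has $r$ parts of size $q+1$, whereas the balanced partition of $n-k+1$ (the block sizes $a_i$) has $r$ parts of size $q$; comparing the two sums of binomials gives a difference of $n+1-k$, and combining this with $\binom{n+1}2-\binom n2=n$ collapses the construction count to exactly $e(T(n+1,k))-(k-1)$. The main conceptual obstacle is really the duality step linking disjointness-packing to point-piercing; once that is in place, both bounds are short and the formula follows from this elementary computation.
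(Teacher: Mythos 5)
Your proof is correct, and it reaches the result by a route that is organized quite differently from the paper's, even though the two share the same combinatorial engine. The paper takes a maximum separated matching $x_1y_1,\dots,x_ty_t$ chosen to minimize $y_1+\cdots+y_t$, and an exchange argument shows no edge of $G$ lies strictly inside a gap between consecutive $y_i$'s; it then deletes $\{y_1,\dots,y_t\}$, observes the remainder is $k$-partite (so has at most $e(T(n-k+1,k))$ edges), adds back a crude degree bound $(k-1)(n-1)-\binom{k}{2}$ for the deleted vertices, and verifies the arithmetic. Your version abstracts the same greedy/exchange mechanism into the classical interval packing--piercing duality $\tau=\nu$ (your greedy by smallest right endpoint is exactly the paper's minimization of the $y_i$'s), which gives a clean structural characterization: $G$ is $\text{Sep-}\mathcal M_k$-free if and only if its edges are pierced by at most $k-1$ vertices. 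From that characterization both bounds fall out symmetrically --- the extremal number is $\binom{n}{2}-\min\sum_i\binom{a_i}{2}$ over block sizes $a_i$ summing to $n-k+1$, minimized at the balanced partition by convexity --- and your final identity matching this to $e(T(n+1,k))-k+1$ checks out (the two binomial sums differ by $rq+(k-r)(q-1)=n+1-k$, and $\binom{n+1}{2}-\binom{n}{2}=n$). Two things your route buys: the upper and lower bounds manifestly coincide without a separate verification, and you actually exhibit the extremal construction, which the paper's appendix proof omits (it only establishes the upper bound and implicitly leaves the construction to the reader). What the paper's route buys is brevity: it never needs to state or prove a general duality, and the $k$-partiteness observation lets it quote the Tur\'an graph's maximality directly rather than optimize block sizes by hand.
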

Note that Theorem~\ref{dwsepar} is phrased slightly less precisely in~\cite{layout}.
For completeness, a self-contained proof of Theorem~\ref{dwsepar} can be found in the appendix.

\subsection{Organisation of the paper and notation}
The remainder of the paper is organised as follow.
In Section~\ref{sec:newresults}, we state and prove our new Tur\'an-type results.
These include Tur\'an-type results for non-separated and non-nested matchings.
We also prove exact Tur\'an-type results for alternating path and $ex_<(n,\text{Cross-}\mathcal M_k,\text{Sep-}\mathcal M_k)$.
In Section~\ref{sec:appRamsey}, we show how our Tur\'an-type results for alternating paths and non-nested matchings yield consequences for Ramsey-type questions.
Section~\ref{sec:conclusion} includes some final remarks and future research directions. 

\smallskip

\noindent{\bf Notation.} Suppose that $G$ is a graph with vertex set $[n]$. 
We write $e(G)$ for the number of edges in $G$.
For any $I\subseteq [n]$, 
$G[I]$ denotes 
the subgraph of $G$ induced by the vertices/points of $I$. 
Let~$E(X,Y)$ denote the edges between vertices~$X$ and~$Y$, and $e(X,Y)$ the number of these edges. 

\section{New Tur\'an-type results}\label{sec:newresults}

\subsection{The Tur\'an number of non-separated matchings}

\begin{theorem}\label{thm:nonsep}
For every $n,k\in\mathbb N$ with $n\ge k$, 
we have
$$ex_<(n,\neg\text{Sep-}\mathcal M_k)= \left\lceil\frac{n-2k+1}{k}\right\rceil\left(\binom{2k-1}{2}-\binom{k-1}{2}\right)+\binom{2k-1-\{n+1\}_k}{2},$$ 
where we write $\{N\}_k$ to denote the residue of $N$ divided by $k$.
\end{theorem}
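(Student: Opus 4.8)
First I would observe that a set of $k$ independent edges is non-separated precisely when their intervals pairwise intersect; by Helly's theorem on the line these $k$ intervals then share a common point, which (as the $2k$ endpoints are distinct) may be taken to be a half-integer $m+\tfrac12$. Conversely, any $k$ independent edges all crossing the cut between $m$ and $m+1$ are pairwise non-separated. Hence, writing $G_m$ for the bipartite graph of edges of $G$ with one endpoint in $[1,m]$ and the other in $[m+1,n]$, the graph $G$ contains a $\neg\text{Sep-}\mathcal M_k$ if and only if $\nu(G_m)\ge k$ for some $m\in[1,n-1]$, where $\nu$ denotes matching number. Thus $G$ is $\neg\text{Sep-}\mathcal M_k$-free iff $\nu(G_m)\le k-1$ for every cut $m$, and by König's theorem this is equivalent to: for every $m$ there is a set $C_m$ of at most $k-1$ vertices covering all edges of $G$ crossing the cut at $m$. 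This reformulation drives both bounds.

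\textbf{Lower bound.}
For the construction I would take the union of cliques on the windows $W_t=\{tk+1,\dots,tk+2k-1\}$, i.e.\ cliques of size $2k-1$ whose consecutive copies are shifted by $k$ and hence overlap in exactly $k-1$ vertices, truncating the last window to end at $n$. Since consecutive windows $W_{t-1},W_t$ are the only ones straddling any fixed cut while $W_{t-1}\cap W_{t+1}=\emptyset$, one checks directly that the crossing edges at each cut are covered by $k-1$ vertices (the $s$ overlap-vertices lying left of the cut, together with a $(k-1-s)$-cover of the remaining complete bipartite part), so this graph is $\neg\text{Sep-}\mathcal M_k$-free. Its edge count follows from inclusion–exclusion: the full windows contribute $\binom{2k-1}{2}$ each, every consecutive overlap removes $\binom{k-1}{2}$, and the truncated last window of size $2k-1-\{n+1\}_k$ contributes $\binom{2k-1-\{n+1\}_k}{2}$, yielding exactly the claimed value with $\lceil (n-2k+1)/k\rceil$ full windows.

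\textbf{Upper bound.}
Fix minimum covers $C_m$ with $|C_m|\le k-1$. The key local estimate I would prove is $d^+(v)\le c^+(v)+(k-1)$ for every $v$, where $d^+(v)$ is the number of neighbours of $v$ larger than $v$ and $c^+(v)=|\{m\ge v:\ v\in C_m\}|$. Indeed (the estimate being trivial when $d^+(v)\le k-1$), listing the right-neighbours $j_1<\dots<j_d$ of $v$, for any cut $v\le m\le j_{d-k+1}-1$ at least $k$ edges $vj_t$ cross $m$; as they form a star and $|C_m|\le k-1$, the vertex $v$ itself must lie in $C_m$. Since $j_{d-k+1}\ge v+(d-k+1)$, this forces $v$ into at least $d-k+1$ covers $C_m$ with $m\ge v$, proving the estimate; the mirror argument gives $d^-(v)\le c^-(v)+(k-1)$. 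Summing $e(G)=\sum_v d^+(v)$ and $e(G)=\sum_v d^-(v)$ separately and adding,
$$2e(G)\le\sum_v\big(c^+(v)+c^-(v)\big)+2(k-1)n\le\sum_{m=1}^{n-1}|C_m|+2(k-1)n\le (k-1)(3n-1),$$
which already delivers the correct leading term $\tfrac32(k-1)n$.

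\textbf{Main obstacle.}
The remaining difficulty is to sharpen this to the \emph{exact} value, i.e.\ to recover the full $\Theta(k^2)$ correction. This requires exploiting the boundary, where the crude ``$+(k-1)$ per side'' is wasteful: near the two ends the covers are genuinely smaller, since $\nu(G_m)\le\min(m,n-m,k-1)$, and the extreme vertices have small one-sided degree ($d^+(v)\le n-v$ and $d^-(v)\le v-1$). Carrying out this bookkeeping — matching the savings $\sum_m\big[(k-1)-\min(m,n-m,k-1)\big]$ together with the end-vertex corrections exactly against the term $\binom{2k-1-\{n+1\}_k}{2}$ and the residue — is where the real work lies. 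I expect this careful end analysis, rather than the charging identity itself, to be the main obstacle.
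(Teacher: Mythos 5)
Your Helly--K\"onig reduction is correct and elegant (pairwise non-separated edges are pairwise intersecting intervals, hence share a half-integer cut, so $G$ is $\neg\text{Sep-}\mathcal M_k$-free iff every cut graph $G_m$ has a vertex cover $C_m$ of size at most $k-1$), your local estimate $d^+(v)\le c^+(v)+(k-1)$ is sound, and your construction is essentially the paper's extremal example. But the theorem asserts an \emph{exact} value, and your upper bound $2e(G)\le (k-1)(3n-1)$ misses it by $\Theta(k^2)$. You say so yourself: the "main obstacle" you defer is exactly the content of the theorem's upper bound, so the proposal is a proof of the asymptotics $\tfrac32(k-1)n+O(k^2)$, not of the statement. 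This is a genuine gap, and it is precisely where the paper's proof does its work.

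Moreover, the repair you sketch cannot close the gap by itself. Carrying out all of your boundary savings --- replacing $|C_m|\le k-1$ by $|C_m|\le\min(m,n-m,k-1)$ and $d^+(v)\le c^+(v)+(k-1)$ by $d^+(v)\le c^+(v)+\min(n-v,k-1)$ (and symmetrically for $d^-$) --- yields, after summing, $e(G)\le\tfrac32(k-1)n-\tfrac12(2k-1)(k-1)$. This bound is \emph{independent of the residue} $\{n+1\}_k$, whereas the true constant term depends on it (in the form $-\tfrac12(2k-1-r)(k-1+r)$ with $r=\{n+1\}_k$, as in Table~\ref{table:1}); your count matches the extremal number only when $r=0$ and overshoots by roughly $\tfrac12 r(k-r)$ otherwise. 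No global cut/degree charging that is blind to $n \bmod k$ can produce a residue-dependent answer, so a structural idea is genuinely missing. The paper supplies it in three steps that have no counterpart in your proposal: a shifting/compression argument (Claim~\ref{claim:key}) producing an extremal graph in which every edge lies above only edges, a structural claim (Claim~\ref{below_comp}) that such a graph has a shortest missing edge $x(x+k)$ of length exactly $k$ whose interior induces a clique, and an induction on $n$ that splits the graph at this missing edge into $G[1,\dots,x+k-1]$ and $G[x+1,\dots,n]$, with the residue arithmetic controlled by the convexity estimate of Claim~\ref{claim:function}. It is this inductive splitting, not a refined charging identity, that recovers the exact constant; to complete your argument you would need to either import it or find a substitute that sees the residue.
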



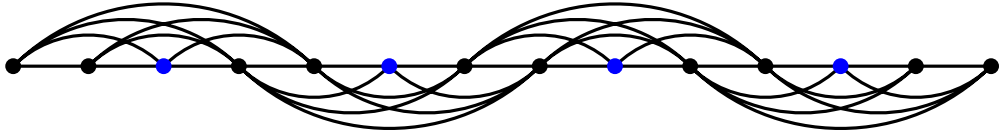
\begin{figure}[h!]
\centering
\begin{tikzpicture}
\draw[very thick] (0,0) -- (13,0);

\draw[very thick] 
(0,0) arc (135:45:1.414)
(1,0) arc (135:45:1.414)
(2,0) arc (135:45:1.414)
(0,0) arc (135:45:2.121)
(1,0) arc (135:45:2.121)
(0,0) arc (135:45:2.828);

\draw[very thick] 
(3,0)+(0,0) arc (-135:-45:1.414)
(3,0)+(1,0) arc (-135:-45:1.414)
(3,0)+(2,0) arc (-135:-45:1.414)
(3,0)+(0,0) arc (-135:-45:2.121)
(3,0)+(1,0) arc (-135:-45:2.121)
(3,0)+(0,0) arc (-135:-45:2.828);

\draw[very thick] 
(6,0)+(0,0) arc (135:45:1.414)
(6,0)+(1,0) arc (135:45:1.414)
(6,0)+(2,0) arc (135:45:1.414)
(6,0)+(0,0) arc (135:45:2.121)
(6,0)+(1,0) arc (135:45:2.121)
(6,0)+(0,0) arc (135:45:2.828);

\draw[very thick] 
(9,0)+(0,0) arc (-135:-45:1.414)
(9,0)+(1,0) arc (-135:-45:1.414)
(9,0)+(2,0) arc (-135:-45:1.414)
(9,0)+(0,0) arc (-135:-45:2.121)
(9,0)+(1,0) arc (-135:-45:2.121)
(9,0)+(0,0) arc (-135:-45:2.828);

\fill (0,0) circle (3pt); 
\fill (1,0) circle (3pt);    
\fill[blue] (2,0) circle (3pt);  
\fill (3,0) circle (3pt);  
\fill (4,0) circle (3pt);  
\fill[blue] (5,0) circle (3pt);  
\fill (6,0) circle (3pt); 
\fill (7,0) circle (3pt); 
\fill[blue] (8,0) circle (3pt); 
\fill (9,0) circle (3pt); 
\fill (10,0) circle (3pt); 
\fill[blue] (11,0) circle (3pt); 
\fill (12,0) circle (3pt); 
\fill (13,0) circle (3pt); 

\end{tikzpicture}
\caption{An extremal construction for Theorem~\ref{thm:nonsep} with $n=14$ and $k=3$.}
\label{fig:non-separated}
\end{figure}

\begin{proof}
    First we exhibit an extremal example.
    We construct it recursively.
    Start with a clique with vertex set $[2k-1-\{n+1\}_k]$.
    Trivially, this does not contain a $k$-matching.
    Suppose we have constructed a graph~$G_t$ with vertex set $[2k-1-\{n+1\}_k+kt]$, for some integer $t\ge0$.
    We define~$G_{t+1}$ to be the graph with vertex set $[2k-1-\{n+1\}_k+k(t+1)]$ and edge set $E(G_{t+1})=E(G_t)\cup S_t$ where
    $S_t$ is the set of edges induced by the last $2k-1$ vertices.
    
    We have
    $e(G_{t+1})=e(G_t)+\binom{2k-1}{2}-\binom{k-1}{2}$.
    It is easy to check that, for $t_0=\lceil(n-2k+1)/k\rceil$, the graph $G_{t_0}$ has $n$ vertices and precisely the required number of edges.
    Suppose for a contradiction that $G_t$ contains a non-separated $k$-matching $M$, and take $t$ to be minimal with respect to this property.
    In particular, we have $t\ge1$.
    If $M$ contains an edge lying in $V(G_{t+1})\setminus V(G_t)$, then such an edge is separated to any edge in $V(G_t)$. In that case, $M$ must lie entirely in $V(G_{t+1})\setminus V(G_t)$, which is not possible as this is a set of only $k$ vertices.
    By minimality, $M\cap V(G_{t+1})$ is non-empty and, by the previous observation, all such edges must be incident to the last $k-1$ vertices of $G_t$. 
    As $M$ contains no separated edges, this implies all edges of $M$ intersect the last $k-1$ vertices of $G_t$, a contradiction.

It remains to prove an appropriate upper bound for $ex_<(n,\neg\text{Sep-}\mathcal M_k)$.
The following key claim states that there exists a graph~$G$ attaining $ex_<(n,\neg\text{Sep-}\mathcal M_k)$ such that no edge of~$G$ lies above a non-edge.
Note that this property is shared by the extremal example described above.
The proof of the claim relies on a shifting argument.
    \begin{claim}\label{claim:key}
        There exists a graph~$G$ with $V(G)=[n]$ and $e(G)=ex_<(n,\neg\text{Sep-}\mathcal M_k)$, which does not contain a non-separated $k$-matching and, for every $x\le a<b\le y$, if $xy\in E(G)$ then $ab\in E(G)$.
    \end{claim}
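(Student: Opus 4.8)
The plan is to prove the claim by a shifting (compression) argument, for which the following reformulation of the forbidden configuration is the key tool. Since any two edges of a non-separated matching have intervals that intersect, Helly's theorem on the line yields a common point; as the edges are pairwise disjoint this point may be taken to be a half-integer $m+\tfrac12$ with $m\in[n-1]$, so every edge $xy$ of the matching satisfies $x\le m<m+1\le y$. Conversely, $k$ pairwise disjoint edges all crossing a fixed ``split'' $m$ form a non-separated $k$-matching. Writing $B_m$ for the bipartite graph consisting of those edges with one endpoint in $\{1,\dots,m\}$ and the other in $\{m+1,\dots,n\}$, this shows that $G$ contains no $\neg\text{Sep-}\mathcal M_k$ if and only if the matching number $\nu(B_m)$ is at most $k-1$ for every split $m\in[n-1]$.

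Starting from any extremal graph $G$, I would then repeatedly apply two \emph{inward shifts}. For $1\le i<n$, let $L_i$ simultaneously replace every edge $iy$ with $y\ge i+2$ by $(i+1)y$ whenever the latter slot is empty; symmetrically, let $R_j$ replace every edge $xj$ with $x\le j-2$ by $x(j-1)$ whenever that slot is empty. Each shift is a bijection from $E(G)$ onto the new edge set, hence preserves $e(G)$, and it strictly decreases the potential $\Phi(G)=\sum_{xy\in E(G)}(y-x)$ whenever it moves an edge. Since $\Phi$ is a non-negative integer, after finitely many shifts I reach a graph $G^*$ with $e(G^*)=ex_<(n,\neg\text{Sep-}\mathcal M_k)$ that is fixed by every $L_i$ and $R_j$. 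Being a fixed point means exactly that for every edge $xy$ with $y\ge x+2$ both $(x+1)y$ and $x(y-1)$ are edges, and a short induction on the span $y-x$ then upgrades this to the full property: if $xy\in E(G^*)$ and $x\le a<b\le y$ then $ab\in E(G^*)$.

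It remains to guarantee that the shifts never create a non-separated $k$-matching, and by the reformulation above this reduces to showing that no shift increases $\nu(B_m)$ for any split $m$. Consider $L_i$ and a split $m$. If $m\le i$ then $L_i$ only deletes crossing edges from $B_m$ (it moves them to the left endpoint $i+1>m$), so $\nu(B_m)$ cannot grow; if $m\ge i+1$ then inside $B_m$ every edge at $i$ has $i$ as its left endpoint, and $L_i$ acts precisely as the bipartite compression redirecting the incidences of the left vertex $i$ to the left vertex $i+1$, which does not increase the matching number by the classical compression lemma. The same analysis applies to $R_j$ on the right side. Hence every $L_i$ and $R_j$ maps a $\neg\text{Sep-}\mathcal M_k$-free graph to a $\neg\text{Sep-}\mathcal M_k$-free graph, so $G^*$ is the extremal graph required by the claim.

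The main obstacle is exactly this last verification, and it is the reason the shift must be performed \emph{simultaneously} over all edges at a given vertex rather than one edge at a time. Relocating a single edge $xy$ to $x(y-1)$ can genuinely create a non-separated matching: if some other edge $cy$ crosses the same split, shortening $xy$ alone can make $x(y-1)$ and $cy$ disjoint while both still cross that split, producing a forbidden $2$-matching. The block shift avoids this because, restricted to each $B_m$, it redirects \emph{all} incidences of one vertex at once and is therefore a bona fide compression, for which matching numbers are monotone. Getting the bookkeeping right---matching the direction of each shift ($L_i$ acting on the left side of $B_m$, $R_j$ on the right) against every split simultaneously---is the delicate part of the argument.
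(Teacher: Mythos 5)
Your proposal is correct, and while it is built on the same engine as the paper's proof --- a simultaneous shift of all eligible edges at a single vertex, governed by the potential $\Phi(G)=\sum_{xy\in E(G)}(y-x)$ --- the way you certify that the shift preserves $\neg\text{Sep-}\mathcal{M}_k$-freeness is genuinely different. (Indeed, your operation $R_{b+1}$ is literally the paper's shift: ``for every $v<b$, if $vb\notin E(G)$ and $v(b{+}1)\in E(G)$, replace $v(b{+}1)$ by $vb$.'') The paper proves preservation by a direct exchange inside a hypothetical non-separated $k$-matching of the shifted graph: if such a matching uses a new edge $vb$, then either no other edge touches $b+1$, in which case $v(b{+}1)$ together with the remaining edges is a non-separated $k$-matching in the original graph, or some edge $u(b{+}1)$ is present, in which case the survival condition forces $ub\in E(G)$ and the swapped pair $v(b{+}1),ub$ completes a forbidden matching in $G$; either way a contradiction. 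You instead interpose a structural reformulation --- by Helly's theorem on the line, a non-separated $k$-matching is exactly a $k$-matching all of whose edges cross a common split $m$, so freeness is equivalent to $\nu(B_m)\le k-1$ for all $m$ --- and then observe that each shift, restricted to any split graph $B_m$, is either edge-deleting or a bipartite vertex compression, which cannot raise the matching number. This buys modularity and transparency: the Helly viewpoint (absent from the paper) explains \emph{why} the simultaneous shift is the right move, and it cleanly separates the combinatorics of ``non-separated'' from the matching-theoretic step. What it costs is self-containment: the compression lemma you cite as classical still needs its short proof (take a maximum matching of the compressed graph using a new edge $(i{+}1)z$; if $i$ is matched to $y$, swap to $iz,(i{+}1)y$, otherwise replace $(i{+}1)z$ by $iz$), and that swap is precisely the exchange the paper carries out by hand --- so the two arguments coincide at their core, with yours packaging the exchange into a reusable lemma. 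The remaining differences (the paper takes a $\Phi$-minimizer among extremal graphs and derives a contradiction, you iterate shifts to a fixed point and then do induction on the span) are equivalent framings of the same termination argument, and both are sound.
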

    \begin{proofclaim}
        Pick a graph $G$ with $V(G)=[n]$ and $e(G)=ex_<(n,\neg\text{Sep-}\mathcal M_k)$ satisfying the following.
        The graph~$G$ does not contain a non-separated $k$-matching and, subject to this, the sum
        $$\sum_{xy\in E(G)}|x-y|$$
        is as small as possible.
        We show that~$G$ satisfies the required properties of the claim.
        Suppose for a contradiction that there exist $x\le a<b\le y$ such that $xy\in E(G)$ and $ab\notin E(G)$.
        If we select $a$ and $b$  so that $|a-b|$ is as large as possible, then either $(a{-}1)b\in E(G)$ or $a(b{+}1)\in E(G)$.
        By symmetry, we may assume that $a(b{+}1)\in E(G)$ and $ab\notin E(G)$.

        Let~$G^*$ be the graph obtained by applying the following procedure on~$G$.
        For every $v\in[n]$ with $v<b$, if $vb\notin E(G)$ and $v(b{+}1)\in E(G)$, then replace the edge $v(b{+}1)$ with the edge $vb$.
        Firstly, it is easy to see that $G^*$ and $G$ have the same number of edges.
        In particular, $e(G^*)=ex_<(n,\neg\text{Sep-}\mathcal M_k)$.
        Furthermore, the procedure always replaces edges with shorter ones, and we know that at least one replacement has occurred (namely, the edge $a(b{+}1)$ has been replaced by the edge $ab$).
        Therefore, we have
        $$\sum_{xy\in E(G^*)}|x-y|<\sum_{xy\in E(G)}|x-y|.$$
        By minimality of~$G$, we conclude that~$G^*$ must contain a non-separated $k$-matching consisting of, edges $e_1,\dots,e_k$ say.
        
        Since $G$ does not contain a non-separated $k$-matching, there exists $i\in[k]$ such that~$e_i$ is not an edge of~$G$.
        We may assume $i=1$.
        In particular, we must have $e_i=vb$ for some $v<b$ where $v(b{+}1)\in E(G)$ and $vb\notin E(G)$.
        Note that the edges $e_2,\dots,e_k$ are not incident to $b$ since they are disjoint from~$e_1$.
        If $e_2,\dots,e_k$ are not incident to $b+1$ either, then they are edges in~$G$.
        However, in that case, $v(b{+}1),e_2,\dots,e_k$ is a non-separated $k$-matching in~$G$, a contradiction.
        Hence, we may assume that~$e_2$ is incident to~$b+1$, say $e_2=u(b{+}1)$.
        Since $e_1$ and $e_2$ are not separated and $v<b$, we must have $u<b$ too.
        Also, since $e_2$ is an edge in $G^*$, we must have that $ub\in E(G)$, otherwise $u(b{+}1)$ would have been replaced by~$ub$.
        Finally, observe that $v(b{+}1),ub,e_3,\dots,e_k$ are all edges in~$G$ and they form a non-separated $k$-matching, a contradiction.
        \qedclaim        
    \end{proofclaim}

    In the next claim, we prove that a graph attaining $ex_<(n,\neg\text{Sep-}\mathcal M_k)$ must contain a minimal missing edge of length approximately $k$.
    Note that this is also a property shared with the extremal construction (see the missing edge between consecutive blue points in Figure~\ref{fig:non-separated}e).

    \begin{claim}\label{below_comp}
        Let~$G$ be a graph with $V(G)=[n]$ and $e(G)=ex_<(n,\neg\text{Sep-}\mathcal M_k)$ which does not contain a non-separated $k$-matching.
        If $n\ge2k$, there exists some $x\in[n]$ such that $x(x+k)$ is a missing edge and the graph $G[\{x+1,\dots,x+k-1\}]$ is complete.
    \end{claim}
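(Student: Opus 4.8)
The plan is to first invoke Claim~\ref{claim:key} and work throughout with an extremal graph $G$ that is downward-closed, i.e.\ $ab\in E(G)$ whenever $x\le a<b\le y$ and $xy\in E(G)$. For such a $G$ the key device is, for each $i\in[n]$, the right-reach $R(i):=\max(\{i\}\cup\{j:ij\in E(G)\})$; downward-closedness then says exactly that $G$ restricted to $[i,R(i)]$ is complete, and it also makes $R$ non-decreasing: if $i<i'<R(i)$ then $i'R(i)\in E(G)$, so $R(i')\ge R(i)$, while if $i'\ge R(i)$ then $R(i')\ge i'\ge R(i)$. Writing $\phi(i):=R(i)-i$ for the (size minus one of the) maximal clique starting at $i$, monotonicity of $R$ gives the one-sided Lipschitz bound $\phi(i+1)\ge\phi(i)-1$. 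The reason for introducing $\phi$ is that the whole claim reduces to finding an index $x$ with $x+k\le n$, $\phi(x)\le k-1$ and $\phi(x+1)\ge k-2$: then $R(x)\le x+k-1<x+k$ shows $x(x+k)$ is a missing edge, while $\phi(x+1)\ge k-2$ gives $(x+1)(x+k-1)\in E(G)$, so that $G[\{x+1,\dots,x+k-1\}]$ is complete by downward-closedness.

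Next I would use extremality to guarantee that $G$ contains a clique of size at least $k$. Since every edge is counted once at its left endpoint, $e(G)=\sum_{i}\phi(i)$; so if $\phi(i)\le k-2$ held for all $i$ we would get $e(G)\le(k-2)n$, which is strictly smaller than the edge count of the extremal construction exhibited above whenever $n\ge 2k$ (a routine verification). This contradiction shows $P:=\{i:\phi(i)\ge k-1\}\ne\emptyset$. Let $x_1:=\min P$; as the clique $[x_1,R(x_1)]$ has at least $k$ vertices we have $x_1+k-1\le n$. The easy case is $x_1\ge 2$, where I simply take $x:=x_1-1$: then $\phi(x)\le k-2$ because $x\notin P$, $\phi(x+1)=\phi(x_1)\ge k-1\ge k-2$, and $x+k=x_1+k-1\le n$, so all three conditions hold.

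The main obstacle is the boundary case $x_1=1$, where the large clique begins at the first vertex and the shift $x=x_1-1$ is unavailable; this is exactly where I expect to need the full hypothesis that $G$ has no non-separated $k$-matching, rather than only $K_{2k}$-freeness. The key sub-step is a crossing-matching obstruction: if $\phi(i)\ge k$ held for every $i\in\{1,\dots,k\}$, then the edges $\{i,i+k\}$ for $i=1,\dots,k$ all exist (each is an edge since $R(i)\ge i+k$, and is legitimate since $i+k\le 2k\le n$), are pairwise disjoint, and are pairwise crossing because $i<i'<i+k<i'+k$ whenever $1\le i<i'\le k$; this crossing, hence non-separated, $k$-matching is a contradiction. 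Therefore some index in $\{1,\dots,k\}$ has $\phi$-value at most $k-1$, and I take $j$ to be the least such index. For $j\ge 2$ we have $\phi(j-1)\ge k$, so $\phi(j)\ge\phi(j-1)-1\ge k-1$ forces $\phi(j)=k-1$; and if $j=1$ then, being in the case $x_1=1$, $\phi(1)\ge k-1$ again gives $\phi(1)=k-1$. Taking $x:=j\le k$ yields $\phi(x)=k-1$ with $x+k\le 2k\le n$, which finishes this case.

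In summary, the skeleton is: reduce to a downward-closed extremal $G$; encode its clique structure by the monotone function $\phi$; produce a $k$-clique by the edge-count/extremality argument; and then locate an index where $\phi$ takes the value $k-1$ inside the admissible range $[1,n-k]$, splitting according to whether the first $k$-clique starts at vertex $1$ or later. I expect the genuinely delicate points to be the boundary bookkeeping (making sure $1\le x$ and $x+k\le n$ simultaneously) and, in the case $x_1=1$, the crossing $k$-matching that rules out too many consecutive large cliques at the left end; the remainder is just the monotonicity $\phi(i+1)\ge\phi(i)-1$.
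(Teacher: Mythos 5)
Your argument is correct, but it takes a genuinely different route from the paper's. The paper proves Claim~\ref{below_comp} for an \emph{arbitrary} extremal graph $G$, without invoking Claim~\ref{claim:key}: it picks a missing edge $uv$ of minimum length $v-u$ (so that $G$ restricted to $[u,v]$ is complete apart from $uv$), uses edge-maximality to obtain a non-separated $k$-matching $\mathcal M$ containing $uv$ in $G+uv$, and then performs local surgery on $\mathcal M$ — an uncrossing swap ($ab \mapsto \{ub,av\}$) showing no edge of $\mathcal M$ lies strictly inside $(u,v)$, and a two-sided replacement argument showing every vertex of $(u,v)$ is saturated — to conclude that $(u,v)$ has exactly $k-1$ vertices, i.e.\ $v=u+k$. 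You instead import the monotone structure of the Claim~\ref{claim:key} graph, encode it in the reach function $\phi(i)=R(i)-i$, and locate $x$ by combining the Lipschitz bound $\phi(i+1)\ge\phi(i)-1$ with two global inputs: the counting identity $e(G)=\sum_i\phi(i)$ together with the lower bound $e(G)>(k-2)n$ (which indeed follows, by a routine computation, from the construction in the first half of the proof), and the explicit crossing $k$-matching $\{i(i+k):i\in[k]\}$ ruling out $\phi\ge k$ on all of $[k]$. Your case analysis ($x_1\ge 2$ versus $x_1=1$, and within the latter $j\ge2$ versus $j=1$) is airtight, including the boundary bookkeeping and the verification that $\phi(x)=k-1$ exactly where needed. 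The trade-offs: your proof consumes more external machinery (Claim~\ref{claim:key} plus the numerical lower bound), whereas the paper's proof of this claim uses only maximality of $G$; in exchange, your matching-theoretic core is much lighter — a single explicit crossing matching in place of the paper's swap-and-saturation arguments. One caveat you should make explicit: by invoking Claim~\ref{claim:key} at the outset, you prove the conclusion only for the particular downward-closed extremal graph supplied by that claim, which is formally weaker than the statement as written (it quantifies over \emph{every} extremal $G$). This weaker version is exactly what the proof of Theorem~\ref{thm:nonsep} needs, since there $G$ is chosen to satisfy Claim~\ref{claim:key} before Claim~\ref{below_comp} is applied, so nothing downstream breaks — but the restriction should be stated.
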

    \begin{proofclaim}
        By the assumptions, we known~$G$ must have at least one missing edge and adding any missing edge must create a non-separated $k$-matching.
        Let $uv$ be a missing edge, say $u<v$, which minimises $v-u$.
        In particular, all edges, except $uv$ must be present in $G[u,u+1,\dots,v]$.
        By maximality of~$G$, it follows that $G+uv$ contains a non-separated $k$-matching $\mathcal M$, that contains the edge $uv$.
        Suppose that an edge $ab$ of the matching $\mathcal M$ lies below $uv$, say $u<a<b<v$.
        Now $\mathcal M\cup\{ub,av\}\setminus\{ab\}$ is a non-separated $k$-matching in $G$, contradiction.
        Suppose that there is a vertex~$w$ with $u<w<v$ which is not saturated by the matching $\mathcal M$.
        Let $\mathcal M_1:=\mathcal M\cup\{uw\}\setminus\{uv\}$ and $\mathcal M_2:=\mathcal M\cup\{wv\}\setminus\{uv\}$.
        Both $\mathcal M_1$ and $\mathcal M_2$ are $k$-matchings in $G$.
        Therefore, by assumption, both of them contain a separated pair of edges.
        In particular, there exists an edge $e\in\mathcal M\setminus\{uv\}$ such that $e$ and $uw$ are separated.
        If $e$ lies to the left of $uw$, then $e$ is also separated to $uv$, contradicting the assumption that $\mathcal M$ is non-separated.
        Hence, $e$ lies to the right of $w$.
        Similarly, there exists an edge $e'\in\mathcal M\setminus\{uv\}$, which lies to the left of $w$.
        Again, this is a contradiction, since $\mathcal M$ was non-separated.

        We conclude that the interval $[u+1,v-1]$ must have size precisely $k-1$.
        Hence $v=u+k$, and we are done.\qedclaim
    \end{proofclaim}

    We prove Theorem~\ref{thm:nonsep} by induction on $n$.
    The theorem trivially holds if $k\le n\le 2k-1$, since the extremal construction is simply the complete graph.
    This is the base step.
    Suppose instead that $n\ge 2k$.
    Let~$G$ be a graph with $V(G)=[n]$ and $e(G)=ex_<(n,\neg\text{Sep-}\mathcal M_k)$, which does not contain a non-separated $k$-matching and satisfied the properties of Claim~\ref{claim:key}.
    Let~$x\in[n]$ satisfy the conditions of Claim~\ref{below_comp} and define graphs $G_1:=G[1,2,\dots,x{+}k-1]$ and $G_2:=G[x{+}1,\dots,n]$.
    By the inductive hypothesis, since~$G_1$ and~$G_2$ have at least~$k$ vertices each, we have the following upper bounds on $e(G_1)$ and $e(G_2)$ (where we also use the equality $\lceil N/k\rceil=(N+\{N\}_k)/k$ for any positive integer $N$).


    $$e(G_1)\le\frac{|G_1|-2k+1+\{|G_1|+1\}_k}{k}\left(\binom{2k-1}{2}-\binom{k-1}{2}\right)+\binom{2k-1-\{|G_1|+1\}_k}{2}$$
    $$e(G_2)\le\frac{|G_2|-2k+1+\{|G_2|+1\}_k}{k}\left(\binom{2k-1}{2}-\binom{k-1}{2}\right)+\binom{2k-1-\{|G_2|+1\}_k}{2}$$
    Observe $e(G)=e(G_1)+e(G_2)-e(G_1\cap G_2)+e([x],[x{+}k,n])$.
    In particular, we have 
    $$e(G)=e(G_1)+e(G_2) -\binom{k-1}{2}$$ 
    since $G_1\cap G_2$ is complete (as $x(x+k)$ is minimal), and $e([x],[x{+}k,n])=0$ by Claim~\ref{claim:key} (as $x(x+k)$ is missing, there are no edges above it).
    Combining these equalities with the bounds on~$e(G_1)$ and~$e(G_2)$, we obtain
    \begin{align}
    e(G)= e(G_1)+e(G_2) -\binom{k-1}{2}\le &\;\frac{n-2k+1+\{|G_1|+1\}_k+\{|G_2|+1\}_k}{k}\left(\binom{2k-1}{2}-\binom{k-1}{2}\right) \notag\\ 
     &\;+f(\{|G_1|+1\}_k,\{|G_2|+1\}_k)-\binom{2k-1}{2}\label{eq:e(G)}
    \end{align}
    where we used $|G_1|+|G_2|=n+k-1$ and set $f(n_1,n_2):=\binom{2k-1-n_1}{2}+\binom{2k-1-n_2}{2}$ for any integers~$n_1$ and~$n_2$.

    \begin{claim}\label{claim:function}
         For integers $1\le n_1\le n_2\le 2k-2$ we have $f(n_1,n_2)\le f(n_1-1,n_2+1)$.
    \end{claim}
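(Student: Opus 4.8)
The plan is to recognise the inequality as a one-variable discrete convexity statement. Writing $g(m):=\binom{2k-1-m}{2}$, we have $f(n_1,n_2)=g(n_1)+g(n_2)$, and the claim reads
$$g(n_1)+g(n_2)\le g(n_1-1)+g(n_2+1).$$
The two pairs $\{n_1,n_2\}$ and $\{n_1-1,n_2+1\}$ have the same sum but the latter is more spread out (since $n_1\le n_2$), and the map $m\mapsto\binom{2k-1-m}{2}$ is convex in $m$, being the composition of the convex function $t\mapsto\binom{t}{2}$ with the affine map $m\mapsto 2k-1-m$. Hence the inequality should follow from convexity; I will make this precise by a direct computation of consecutive differences.

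Concretely, I would rearrange the desired inequality into the equivalent form
$$g(n_1)-g(n_1-1)\le g(n_2+1)-g(n_2),$$
and evaluate each side using the identity $\binom{N+1}{2}=\binom{N}{2}+N$. Setting $N=2k-1-n_1$ gives $g(n_1)-g(n_1-1)=\binom{N}{2}-\binom{N+1}{2}=-N=n_1-2k+1$, and setting $M=2k-2-n_2$ gives $g(n_2+1)-g(n_2)=\binom{M}{2}-\binom{M+1}{2}=-M=n_2-2k+2$. Here one must check that all binomial coefficients are evaluated at non-negative arguments so that this identity applies validly; the hypothesis $1\le n_1\le n_2\le 2k-2$ guarantees exactly this, in particular $2k-2-n_2\ge 0$.

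With these two expressions in hand, the rearranged inequality $n_1-2k+1\le n_2-2k+2$ is equivalent to $n_1\le n_2+1$, which holds trivially from $n_1\le n_2$. This completes the argument.

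I do not expect any genuine obstacle in this claim: it is a routine convexity estimate. The only points requiring care are getting the signs right in the two difference computations and confirming that the range $1\le n_1\le n_2\le 2k-2$ keeps every binomial argument non-negative, so that the telescoping identity is legitimate throughout.
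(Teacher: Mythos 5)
Your proof is correct and is essentially the paper's own argument: both rest on the identity $\binom{N+1}{2}=\binom{N}{2}+N$ and reduce the claim to the difference $n_2-n_1+1\ge 1$ (equivalently your $n_1\le n_2+1$). The only difference is presentational — the paper expands $f(n_1-1,n_2+1)$ in one computation, while you split it into two one-variable differences under a convexity framing — but the underlying arithmetic is identical.
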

    \begin{proofclaim}
    We have
    \begin{align*}f(n_1-1,n_2+1)&=\binom{2k-1-n_1+1}{2}+\binom{2k-1-n_2-1}{2}\\
    &=\binom{2k-1-n_1}{2}+(2k-1-n_1)+\binom{2k-1-n_2}{2}-(2k-1-n_2-1)\\
    &=f(n_1,n_2)+(n_2-n_1+1)>f(n_1,n_2).
    \end{align*}\qedclaim
    \end{proofclaim}
    
    Since $|G_1|+1+|G_2|+1=n+k+1$, we either have $\{|G_1|+1\}_k+\{|G_2|+1\}_k=\{n+1\}_k$ or $\{|G_1|+1\}_k+\{|G_2|+1\}_k=\{n+1\}_k+k$.
    In the former case, we have $f(\{|G_1|+1\}_k,\{|G_2|+1\}_k)\le f(0,\{n+1\}_k)$ by Claim~\ref{claim:function}.
    Combining this with inequality~\eqref{eq:e(G)}, we obtain
    \begin{align}\label{eq:case1}
    e(G)\le\frac{n-2k+1+\{n+1\}_k}{k}\left(\binom{2k-1}{2}-\binom{k-1}{2}\right)+f(0,\{n+1\}_k)-\binom{2k-1}{2}.
    \end{align}
    On the other hand, if $\{|G_1|+1\}_k+\{|G_2|+1\}_k=\{n+1\}_k+k$ then $f(\{|G_1|+1\}_k,\{|G_2|+1\}_k)\le f(k,\{n+1\}_k)$ again by Claim~\ref{claim:function}.
    Combining this with inequality~\eqref{eq:e(G)}, we obtain
    \begin{align}\label{eq:case2}
    e(G)\le\frac{n-2k+1+\{n+1\}_k+k}{k}\left(\binom{2k-1}{2}-\binom{k-1}{2}\right)+f(k,\{n+1\}_k)-\binom{2k-1}{2}.
    \end{align}
    Finally, it is easy to show that the right hand-sides of both~\eqref{eq:case1} and~\eqref{eq:case2} are equal to
    $$\frac{n-2k+1+\{n+1\}_k}{k}\left(\binom{2k-1}{2}-\binom{k-1}{2}\right)+\binom{2k-1-\{n+1\}_k}{2}$$ 
    as required.

\end{proof}

\subsection{The Tur\'an number of (strongly) non-nested matchings}

The question for non-nested matching is apparently the most difficult to settle.
Before proceeding further, we introduce a special type of non-nested matching.

A $k$-matching within an ordered graph is {\it strongly non-nested} if it is a disjoint union of crossing matchings such that two edges cross if and only if they belong to the same crossing matching. 

\begin{figure}[!ht]
    \centering
    \includegraphics[width=0.6\linewidth]{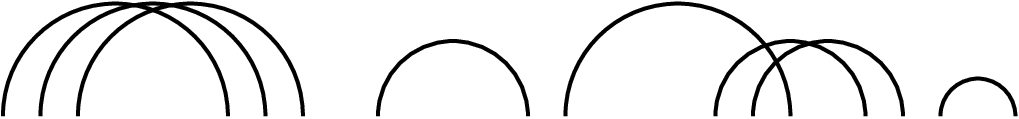}
    \caption{A typical strongly non-nested matching.}
    \label{islands}
\end{figure}

We denote a strongly non-nested $k$-matching as $\neg\text{Nest*-}\mathcal M_k$.
Since a strongly non-nested matching is non-nested, we immediately have

$$ex_<(n,\neg\text{Nest-}\mathcal M_k)\le ex_<(n,\neg\text{Nest*-}\mathcal M_k).$$

We are able to give lower and upper bounds for the Tur\'an numbers above that differ by a quadratic polynomial in $k$.
Thus, this resolves the question asymptotically in $n$, for $k$ fixed.

\begin{theorem}\label{thm:non-nested}
    For every $n,k\in\mathbb N$ with $n\ge2k$, we have 
    $$(k-1)n\le ex_<(n,\neg\text{Nest-}\mathcal M_k)\le ex_<(n,\neg\text{Nest*-}\mathcal M_k)\le(k-1)n+\binom{k-1}{2}.$$    
\end{theorem}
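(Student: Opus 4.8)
The middle inequality is immediate and was already observed before the statement: a strongly non-nested matching is in particular non-nested, so any graph avoiding $\neg\text{Nest-}\mathcal M_k$ also avoids $\neg\text{Nest*-}\mathcal M_k$. I therefore concentrate on the outer two bounds.

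For the lower bound the plan is to exhibit an explicit construction. Let $G$ be the $n$-vertex ordered graph whose edge set consists of all pairs meeting the \emph{hub set} $\{1,\dots,k-1\}$ together with all pairs inside $\{k,\dots,2k-1\}$ (this is possible since $n\ge 2k-1$). Counting the $\binom{k-1}{2}$ edges inside the hub set, the $(k-1)(n-k+1)$ edges from the hub set to the remaining vertices, and the $\binom{k}{2}$ edges of the second clique gives exactly $(k-1)n$ edges. To check that $G$ contains no non-nested $k$-matching, I would use the elementary fact that in any non-nested matching, ordering the edges by left endpoint makes the right endpoints increase too (two disjoint edges $[a,b],[c,d]$ with $a<c$ are non-nested precisely when $b<d$). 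Every edge of $G$ either meets the hub set or lies inside $\{k,\dots,2k-1\}$. If a non-nested matching uses an edge of the latter type, monotonicity forces all of its edges to have both endpoints in $[1,2k-1]$, so it has at most $k-1$ edges; otherwise every edge meets one of the $k-1$ hubs, so again there are at most $k-1$ edges. Either way the largest non-nested matching has size at most $k-1$, which gives $(k-1)n\le ex_<(n,\neg\text{Nest-}\mathcal M_k)$.

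For the upper bound I would argue by induction on $n$, peeling off a ``top level'' of the graph. Write $g(H)$ for the size of a largest strongly non-nested matching of an ordered graph $H$. The basic structural input is superadditivity across a cut: for any $p$, joining a strongly non-nested matching of $G[1,p]$ to one of $G[p+1,n]$ (edges of the two parts are automatically separated) shows $g(G)\ge g(G[1,p])+g(G[p+1,n])$. Setting $f(i):=g(G[1,i])$, the function $f$ is nondecreasing and increases in unit steps from $0$ to $g(G)\le k-1$, so $[n]$ splits into consecutive intervals $V_0<V_1<\dots<V_{k-1}$ on which $f\equiv j$. Take the top interval $V_{k-1}=\{p+1,\dots,n\}$ and put $m:=|V_{k-1}|$. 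Superadditivity at $p$ gives $g(G[V_{k-1}])\le 1$, i.e.\ every two independent edges of $G[V_{k-1}]$ are nested; this is exactly the $k=2$ instance and yields $e(G[V_{k-1}])\le m$. The induction hypothesis applied to $G[1,p]$ (which satisfies $g\le k-2$) bounds its edges by $(k-2)p+\binom{k-2}{2}$.

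It then remains to control the number $C$ of edges running between $[1,p]$ and the top interval, and this is where I expect the main difficulty to lie. A short computation shows that the target bound $(k-1)n+\binom{k-1}{2}$ follows as soon as $C\le p+(k-2)m+(k-2)$. A helpful feature is that two cross edges can never be separated (each contains the point between $p$ and $p+1$), so they are pairwise crossing or nested, and a crossing submatching of them straddling the cut is an island which, combined with strongly non-nested matchings to its left and right, must respect the budget $k-1$. The obstacle is that a direct crossing-matching estimate via Theorem~\ref{thm:crossing} only produces the coefficient $2(k-1)$, whereas the sharp answer needs coefficient $k-1$; closing this gap---morally the same phenomenon separating non-crossing matchings (coefficient $k-1$, after Kupitz) from crossing ones---is the crux, and I expect it to require exploiting that each cross edge also nests with the suffix structure on its right, tightening the naive count. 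The residual $\binom{k-1}{2}$ is precisely the slack accumulated through the base cases. I would stress that the naive alternative of deleting a minimum-degree vertex fails here, since a single right endpoint can have arbitrarily large back-degree (a star) without creating any matching at all, so the high-degree/low-density interplay has to be tracked globally rather than vertex by vertex.
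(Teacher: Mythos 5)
Your middle inequality is indeed immediate, and your lower bound is correct: the hub-plus-clique construction is exactly the family the paper exhibits in Figure~\ref{fig:non-nested1} (a clique on $[2k-1]$ grown by repeatedly adding a vertex joined to $[k-1]$), your edge count $(k-1)n$ is right, and the monotonicity fact --- in a non-nested matching, sorting by left endpoints also sorts the right endpoints --- correctly caps any non-nested matching of your graph at $k-1$ edges.

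The upper bound, however, contains a genuine gap, and you have named it yourself: your induction reduces everything to the estimate $C\le p+(k-2)m+(k-2)$ on the number of edges crossing the cut, and you never prove this; you only observe that a direct application of Theorem~\ref{thm:crossing} gives the coefficient $2(k-1)$ instead of the needed $k-1$, and declare closing that gap to be ``the crux.'' An argument whose decisive step is announced rather than carried out is not a proof. Worse, the difficulty is intrinsic to your decomposition: the cross edges at the cut can form large crossing matchings interleaved with the structure on both sides, so bounding them with the sharp per-vertex coefficient is essentially the original problem all over again, now with boundary conditions.

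The paper avoids any vertex decomposition and instead splits the edge set globally by length, which is precisely the observation that repairs your coefficient complaint. No edge $xy$ with $|x-y|\le k-1$ can lie in a crossing $k$-matching (it can cross at most $k-2$ disjoint edges), so one may add \emph{all} short edges to $G$ for free and apply Theorem~\ref{thm:crossing} to the completed graph; subtracting its $\sum_{i=1}^{k-1}(n-i)=(k-1)n-\binom{k}{2}$ short edges shows that $G$ has at most $(k-1)n-\binom{2k-1}{2}+\binom{k}{2}$ edges of length at least $k$ (Corollary~\ref{col:crossing}). The doubled coefficient $2(k-1)$ you ran into is thus an artifact of counting short edges, and the short edges of $G$ itself are then handled by a separate pigeonhole (Lemma~\ref{lem:non-nested}): for each fixed $\ell\le k-1$, the edges of length exactly $\ell$ in $K_n$ partition into two strongly non-nested matchings, so $G$ carries at most $2(k-1)$ of them, i.e.\ at most $2(k-1)^2$ short edges in total. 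Adding the two bounds gives exactly $(k-1)n+\binom{k-1}{2}$, with no induction and no cut to control.
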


In fact, we conjecture that the lower bound in Theorem~\ref{thm:non-nested} is optimal for strongly non-nested matching, and thus for non-nested matching.

\begin{restatable}{conjecture}{nnconj}\label{conj:nn}
    For every $n,k\in\mathbb N$ with $n\ge 2k$, we have 
    $$ex(n,\neg\text{Nest-}\mathcal M_k)=ex(n,\neg\text{Nest*-}\mathcal M_k)=(k-1)n.$$  
\end{restatable}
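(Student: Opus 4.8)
The plan is to prove the single inequality $ex_<(n,\neg\text{Nest*-}\mathcal M_k)\le (k-1)n$. Together with the chain $(k-1)n\le ex_<(n,\neg\text{Nest-}\mathcal M_k)\le ex_<(n,\neg\text{Nest*-}\mathcal M_k)$ already established in Theorem~\ref{thm:non-nested}, this collapses all three quantities to $(k-1)n$ and proves Conjecture~\ref{conj:nn}. The starting point is a clean reformulation. Writing each edge as $(l,r)$ with $l<r$, a pair with $l<l'$ is nested exactly when $r'<r$; hence a matching is \emph{non-nested} if and only if its edges can be listed as $(l_1,r_1),\dots,(l_k,r_k)$ with $l_1<\dots<l_k$ and $r_1<\dots<r_k$, i.e. it is a \emph{monotone} matching. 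A strongly non-nested matching is then precisely a monotone matching whose crossing relation is transitive, so that it splits into consecutive, pairwise separated crossing ``islands''. I would use this description throughout, since finding a monotone matching that additionally organises into separated islands is the whole content of the upper bound.

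First I would record the structural reductions that come for free. As in Claim~\ref{claim:key}, a shifting argument should produce an extremal $G$ that is closed under taking subintervals: if $xy\in E(G)$ and $x\le a<b\le y$ then $ab\in E(G)$. One must recheck that this particular shift cannot create a strongly non-nested $k$-matching, but the swap analysis is analogous to the one in the proof of Theorem~\ref{thm:nonsep}. For such a subinterval-closed $G$, every complete block is short: if $G[\,p,\dots,q\,]$ is complete then $q-p+1\le 2k-1$, since a complete ordered graph on $2k$ vertices already contains a crossing, hence strongly non-nested, $k$-matching, as in Theorem~\ref{thm:crossing}. In particular the farthest neighbour of any vertex lies within distance $2k-2$, so every vertex has degree at most $2k-2$ on either side. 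I would then run an induction on $n$: if some boundary vertex has degree at most $k-1$ I delete it and the inductive bound $e(G)\le (k-1)(n-1)+(k-1)=(k-1)n$ follows immediately. The residual difficulty is therefore concentrated entirely in boundary vertices of degree between $k$ and $2k-2$.

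To handle these, I would mirror the recursion used for the non-separated case: locate the shortest missing edge $u\,v$, so that $G[u,\dots,v]$ is complete apart from $uv$ and $v-u\le 2k-1$ by the block bound above, and split $G$ into a left piece $G_1=G[1,\dots,v-1]$ and a right piece $G_2=G[u+1,\dots,n]$ overlapping in the clique $G[u+1,\dots,v-1]$. Since a strongly non-nested matching found inside $G_1$ to the left of $u$ can be completed by a separated single-edge island taken from $[u,v-1]$, the two pieces interact only through this short clique, and one would try to push the inductive bounds on $e(G_1)$ and $e(G_2)$ through the split. An alternative, perhaps cleaner, route is to induct on $k$ by peeling off a \emph{rightmost} separated island: take the edge $uv$ with $v$, then $u$, maximal, and argue that $G[1,\dots,u-1]$ must avoid a strongly non-nested $(k-1)$-matching, since otherwise appending $uv$ as a separated rightmost island produces the forbidden configuration.

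The main obstacle is exactly the accounting of the edges straddling the splitting position, equivalently the edges incident to the short boundary clique. This is where the proof of Theorem~\ref{thm:non-nested} leaks the additive term $\binom{k-1}{2}$: a naive recursion is free to charge a full $(k-1)$-clique to the boundary at every step. To reach the exact bound $(k-1)n$ one must show that these boundary edges cannot all be genuinely ``extra'', i.e. that a vertex carrying a surplus of more than $k-1$ incident edges on one side forces, together with the matching produced recursively on the separated remainder, a true strongly non-nested $k$-matching. Making the induction on $n$ and the induction on $k$ interlock so that no such surplus survives — in particular controlling the edges crossing each peeled-off island without paying for a boundary clique — is the crux, and I expect it to require a discharging scheme on left endpoints rather than a black-box application of the recursion.
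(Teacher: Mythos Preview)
The statement you are addressing is Conjecture~\ref{conj:nn}, which the paper explicitly leaves \emph{open}; there is no proof in the paper to compare against, only the bounds of Theorem~\ref{thm:non-nested}. Your own final paragraph already concedes that the decisive step --- eliminating the additive $\binom{k-1}{2}$ surplus when the recursion crosses a short clique --- is unresolved and would ``require a discharging scheme'' you do not provide. So what you have written is a plan, not a proof, for a problem the authors themselves could not settle.

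There is also a concrete error earlier in the plan. The shift of Claim~\ref{claim:key} does \emph{not} preserve the property of avoiding a strongly non-nested $k$-matching, so you cannot assume the extremal graph is subinterval-closed. Take $k=2$ and $G$ on $[5]$ with edges $\{14,24,45\}$: every edge meets vertex~$4$, so $G$ has no $2$-matching whatsoever. The maximal missing subinterval under an edge is $13$, and the shift with $b=3$ replaces $14,24$ by $13,23$, yielding $G^*=\{13,23,45\}$; now $13$ and $45$ form a separated pair, a strongly non-nested $2$-matching. The failure is precisely the case your swap analysis omits: when the new edge $vb$ is \emph{separated} from a matching edge whose left endpoint is $b{+}1$, there is no analogue of the $v(b{+}1)\leftrightarrow ub$ exchange that rescues the non-separated argument. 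In fact for $k=2$ one can check that \emph{no} subinterval-closed graph on $n\ge 4$ vertices avoids a non-nested $2$-matching while having $n$ edges (closure forces all edges to have length at most~$2$, and then at most three edges survive), so the reduction you are aiming for is simply unavailable in this problem.
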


There are many graphs achieving the lower bound in Theorem~\ref{thm:non-nested} (and Conjecture~\ref{conj:nn}).
Indeed, given an ordered graph $G$ containing no non-nested $k$-matching and with $e(G)=(k-1)|G|$, consider the following two operations.

\smallskip

{\bf 1.} If $G$ has vertex set $[n]$, add a new vertex $n+1$ and all edges $(i,n+1)$ for $i\in[k-1]$.

{\bf  2.} If $G$ has vertex set $[n]$ then relabel vertex $i$ with $n-i$ for every $i\in[n]$.

\smallskip

None of the two operations create a non-nested $k$-matching, and the resulting graph $G$ satisfies $e(G)=(k-1)|G|$.
Hence, starting with a clique on $2k-1$ vertices and performing {\bf 1} and {\bf 2} in any order, yields many such graphs, see for example Figure~\ref{fig:non-nested1}.

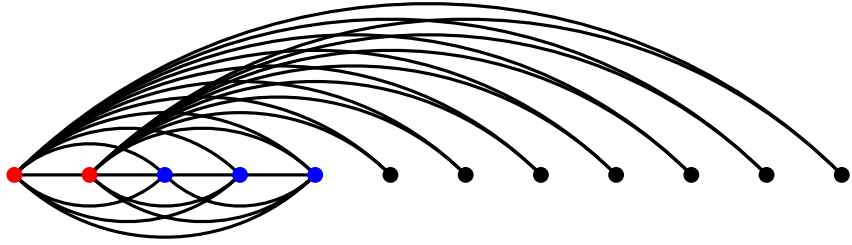
\begin{figure}[h!]
\centering
\begin{tikzpicture}

\draw[very thick] (0,0) -- (4,0);
\draw[very thick] 
(0,0) arc (-135:-45:1.414)
(1,0) arc (-135:-45:1.414)
(2,0) arc (-135:-45:1.414)
(0,0) arc (-135:-45:2.121)
(1,0) arc (-135:-45:2.121)
(0,0) arc (-135:-45:2.828);

\foreach \i in {2,...,11}{
\draw[very thick] (0,0) arc (135:45:\i/1.414);}

\foreach \i in {3,...,10}{
\draw[very thick] (1,0) arc (135:45:\i/1.414);}

\fill[red] (0,0) circle (3pt); 
\fill[red] (1,0) circle (3pt);    
\fill[blue] (2,0) circle (3pt);  
\fill[blue] (3,0) circle (3pt);  
\fill[blue] (4,0) circle (3pt);  
\fill(5,0) circle (3pt);  
\fill (6,0) circle (3pt); 
\fill (7,0) circle (3pt); 
\fill (8,0) circle (3pt); 
\fill (9,0) circle (3pt); 
\fill (10,0) circle (3pt); 
\fill (11,0) circle (3pt); 
\end{tikzpicture}
\caption{A construction achieving the lower bound of Theorem~\ref{thm:non-nested} with $k=3$, obtained by starting with a $5$-vertex clique and repeatedly applying step {\bf 1}. 
}
\label{fig:non-nested1}
\end{figure}

The following is yet another construction achieving the same bound, which is not obtained by the operations above.
Note that further examples can be obtained by combining this new construction with operations {\bf 1} and {\bf 2}.

\smallskip

{\bf Construction.} For any $n,k\in\mathbb N$ with $n\ge3(k-1)$, let~$G$ be the following graph with vertex set~$V(G)=[n]$.
Fix any $k-1$ vertices in $[k,n-k+1]$ and add all edges incident to them. There are $(k-1)(n-1)-\binom{k-1}{2}$ of these edges.
Also add all edges $xy$ with $|x-y|\ge n-k+1$.
There are $k-1+k-2+\dots+1=\binom{k}{2}$ such edges.
See Figure~\ref{fig:non-nested2} for a drawing of this graph.
Now, $e(G)=(k-1)n$.
Suppose to the contrary that~$G$ contains a non-nested $k$-matching $M$.
Observe that $M$ does not contain any edge $xy$ such that $|x-y|\ge n-k+1$.
Indeed, if $x<y$, then each of the remaining $k-1$ edges of $M$ would be incident to $\{v\in[n]:v<x \text{ or } v>y\}$, but there are only $k-2$ such vertices.
Hence, all edges of $M$ must be incident to the $k-1$ vertices in $[k,n-k+1]$, which is again a contradiction.

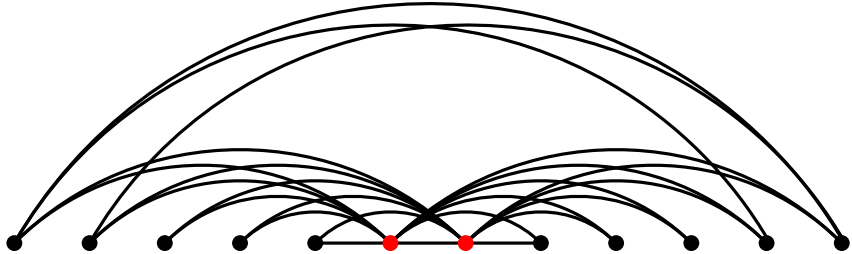
\begin{figure}[h!]
\centering
\begin{tikzpicture}
\draw[very thick] (0,0) arc (150:30:6.37);
\draw[very thick] (0,0) arc (150:30:5.8);
\draw[very thick] (1,0) arc (150:30:5.8);

\foreach \i in {2,...,6}{
\draw[very thick] (5,0) arc (135:45:\i/1.414);}
\foreach \i in {2,...,5}{
\draw[very thick] (6,0) arc (135:45:\i/1.414);}

\foreach \i in {2,...,5}{
\draw[very thick] (5,0) arc (45:135:\i/1.414);}
\foreach \i in {2,...,6}{
\draw[very thick] (6,0) arc (45:135:\i/1.414);}

\draw[very thick] (4,0) -- (7,0);

\fill (0,0) circle (3pt); 
\fill (1,0) circle (3pt);    
\fill (2,0) circle (3pt);  
\fill (3,0) circle (3pt);  
\fill (4,0) circle (3pt);  
\fill[red] (5,0) circle (3pt);  
\fill[red] (6,0) circle (3pt); 
\fill (7,0) circle (3pt); 
\fill (8,0) circle (3pt); 
\fill (9,0) circle (3pt); 
\fill (10,0) circle (3pt); 
\fill (11,0) circle (3pt); 
\end{tikzpicture}
\caption{A construction achieving the lower bound of Theorem~\ref{thm:non-nested} with~$k=3$ and vertex set~$[12]$.
An edge~$xy$ is included if either $x$ and/or $y$ is red, or if $|x-y|\in\{10,11\}$.}
\label{fig:non-nested2}
\end{figure}

\subsubsection{Proof of the upper bound of Theorem~\ref{thm:non-nested}}

The following result provides an upper bound for the number of long edges in a graph with no crossing $k$-matching.
It is essentially a corollary of Theorem~\ref{thm:crossing}.

\begin{corollary}\label{col:crossing}
    Let $n,k\in\mathbb N$ with $n\ge2k$.
    If $G$ is a graph with $V(G)=[n]$ and $G$ does not contain a crossing $k$-matching, then
    $$|\{xy\in E(G):|x-y|\ge k\}|\le(k-1)n-\binom{2k-1}{2}+\binom{k}{2}.$$
\end{corollary}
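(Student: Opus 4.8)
The plan is to deduce the bound from Theorem~\ref{thm:crossing} by completing $G$ to a graph that contains \emph{every} short edge while still avoiding a crossing $k$-matching; the point is that the $\sum_{\ell=1}^{k-1}(n-\ell)=(k-1)n-\binom{k}{2}$ short edges can be added for free, which cuts the bound of Theorem~\ref{thm:crossing} roughly in half and down to the one we want.

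First I would record the structural fact that \emph{every edge of a crossing $k$-matching has length at least $k$}. Indeed, if edges $e_1,\dots,e_k$ pairwise cross, order them by left endpoint as $a_1<\dots<a_k$ and let $b_i$ denote the right endpoint of $e_i$; the pairwise crossing condition then forces the canonical pattern $a_1<a_2<\dots<a_k<b_1<b_2<\dots<b_k$. For each $i$, the $k-1$ distinct integers $a_{i+1},\dots,a_k,b_1,\dots,b_{i-1}$ lie strictly between $a_i$ and $b_i$, whence $b_i-a_i\ge k$. In particular, a crossing $k$-matching never uses an edge of length at most $k-1$.

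Let $H$ be the spanning subgraph of $G$ formed by its long edges (those of length at least $k$), so that $e(H)=|\{xy\in E(G):|x-y|\ge k\}|$ is the quantity to be bounded. I would then set $G^{+}:=H\cup\{xy:|x-y|\le k-1\}$, i.e.\ add all short edges to $H$. By the structural fact, any crossing $k$-matching in $G^{+}$ consists solely of long edges; but the long edges of $G^{+}$ are exactly those of $H\subseteq G$, so such a matching would already appear in $G$. As $G$ has no crossing $k$-matching, neither does $G^{+}$.

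Finally, applying Theorem~\ref{thm:crossing} to $G^{+}$ (valid since $n\ge 2k$) gives $e(G^{+})\le 2(k-1)n-(2k-1)(k-1)$. Since $G^{+}$ is the disjoint union of the $e(H)$ long edges and all $(k-1)n-\binom{k}{2}$ short edges, subtracting and using $\binom{2k-1}{2}=(2k-1)(k-1)$ yields
$$e(H)\le (k-1)n-\binom{2k-1}{2}+\binom{k}{2},$$
as required. The only real content is the structural observation of the first step: once one sees that a crossing $k$-matching is forced to be entirely long, adding every short edge is harmless, and this is precisely what converts the bound of Theorem~\ref{thm:crossing} into the sharp estimate. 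The remaining manipulation is routine arithmetic, so I expect no genuine obstacle beyond correctly establishing the canonical ordering $a_1<\dots<a_k<b_1<\dots<b_k$.
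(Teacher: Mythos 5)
Your proof is correct and takes essentially the same approach as the paper: both rest on the observation that every edge of a crossing $k$-matching has length at least $k$, so all short edges may be assumed present, after which Theorem~\ref{thm:crossing} is applied and the $(k-1)n-\binom{k}{2}$ short edges are subtracted. The only cosmetic difference is that you build $G^{+}$ directly from the long edges of $G$ plus all short edges (and spell out the structural fact via the pattern $a_1<\dots<a_k<b_1<\dots<b_k$, which the paper merely asserts), whereas the paper reaches the same saturated graph by a greedy edge-addition procedure.
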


\begin{proof}
    We start by repeatedly adding any missing edge to $G$ without creating a crossing $k$-matching, as long as possible.
    Observe that adding a missing edge $xy$ with $|x-y|\le k-1$ never creates a crossing $k$-matching.
    Therefore, at the end of this procedure, the graph $G$ does not contain a crossing $k$-matching and $xy\in E(G)$ for every distinct $x,y\in[n]$ with $|x-y|\le k-1$.
        
    By Theorem~\ref{thm:crossing}, we have $e(G)\le ex_<(n,\text{Cross-}\mathcal M_k)=2(k-1)n-\binom{2k-1}{2}$.
    It follows that

    \begin{align*}
    & |\{xy\in E(G):|x-y|\ge k\}|=e(G)-|\{xy\in E(G):|x-y|\le k-1\}|\\
    =\; & e(G)-\sum_{i=1}^{k-1}(n-i)\le\left(2(k-1)n-\binom{2k-1}{2}\right)-\left((k-1)n-\binom{k}{2}\right)\\
    =\; & (k-1)n-\binom{2k-1}{2}+\binom{k}{2}.
    \end{align*}
\end{proof}

On the other hand, a simple pigeonhole argument yields an upper bound for the number of short edges in an ordered graph with no strongly non-nested $k$-matching.

\begin{lemma}\label{lem:non-nested}
    Let $n,k\in\mathbb N$ with $n\ge2k$.
    If $G$ is a graph with $V(G)=[n]$ and $G$ does not contain a strongly non-nested $k$-matching, then
    $$|\{xy\in E(G):|x-y|\le k-1\}|\le2(k-1)^2.$$
\end{lemma}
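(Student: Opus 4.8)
The plan is to partition the short edges according to their length and to bound each length class separately by $2(k-1)$. Since a short edge has length $\ell:=|x-y|\in\{1,\dots,k-1\}$, there are exactly $k-1$ classes, so once each class is shown to contain at most $2(k-1)$ edges, summing over $\ell$ yields $|\{xy\in E(G):|x-y|\le k-1\}|\le(k-1)\cdot 2(k-1)=2(k-1)^2$, as required. Note that a strongly non-nested matching using only length-$\ell$ edges of $G$ is in particular a strongly non-nested matching in $G$, so it suffices to control a single length class at a time.

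The crux is the following extraction claim: \emph{any collection of $m$ edges, all of the same length $\ell$, contains a strongly non-nested matching of size at least $\lceil m/2\rceil$.} Granting this, if some length class contained $m\ge 2k-1$ edges, then $\lceil m/2\rceil\ge k$ would give a strongly non-nested $k$-matching, contradicting the hypothesis on $G$; hence each class has at most $2k-2=2(k-1)$ edges, completing the argument. To prove the claim I would partition the vertex set into consecutive blocks $B_j:=[(j-1)\ell+1,\,j\ell]$ of $\ell$ vertices each, and classify each length-$\ell$ edge by the block containing its left endpoint. Two length-$\ell$ edges whose left endpoints lie in the same block differ by at most $\ell-1$ and, having equal length, are therefore crossing and independent; two length-$\ell$ edges whose left endpoints lie in blocks $B_j,B_{j'}$ with $|j-j'|\ge 2$ differ by at least $\ell+1$ and are therefore separated. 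Consequently, the set of all length-$\ell$ edges whose left endpoint lies in an even-indexed block forms a strongly non-nested matching (its islands being the individual occupied blocks), and likewise for the odd-indexed blocks. Since every edge falls into one of these two sets, the larger of the two has size at least $\lceil m/2\rceil$, which proves the claim.

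The only delicate point is to certify that the extracted matching is genuinely \emph{strongly} non-nested, i.e.\ that two of its edges cross precisely when they lie in the same block, rather than being merely non-nested. Choosing the block width to be exactly $\ell$ is what makes this automatic: inside a single block the equal-length edges are forced to pairwise cross, while skipping every second block guarantees that edges from distinct selected blocks are separated, hence never cross. It is worth recording the underlying reason why non-nestedness never has to be checked directly: two edges of equal length can never be nested, so for same-length edges the patterns reduce entirely to the dichotomy between crossing and separation, which is exactly what the width-$\ell$ block partition disentangles. With this observation in hand, no case analysis beyond the two inequalities on endpoint differences is needed, so the whole argument is indeed an elementary pigeonhole.
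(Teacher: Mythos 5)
Your proof is correct and takes essentially the same route as the paper: the paper also bounds each length class $|x-y|=\ell$ by $2(k-1)$ via the observation that the length-$\ell$ edges of the complete graph can be partitioned into two strongly non-nested matchings, which is exactly what your width-$\ell$ block construction (odd-indexed versus even-indexed blocks) makes explicit. The only difference is presentational: the paper states the partition as an observation, while you spell out the blocks and verify the crossing/separated dichotomy for equal-length edges.
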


\begin{proof}
    Let~$K$ denote the complete graph on vertex set~$[n]$. 
    Observe that, for every $\ell\in[n]$, the edge set $\{xy\in E(K):|x-y|=\ell\}$ can be partitioned into two strongly non-nested $k$-matchings $M_1$ and $M_2$.
    Therefore, we have $|\{xy\in E(G):|x-y|=\ell\}|\le 2(k-1)$ as otherwise, by the pigeonhole principle, either $E(G)\cap M_1$ or $E(G)\cap M_2$ has more than $k$ edges.
    Hence $G$ contains a strongly non-nested $k$-matching, a contradiction.
    This yields the required upper bound.
\end{proof}

Combining Corollary~\ref{col:crossing} and Lemma~\ref{lem:non-nested} immediately yields Theorem~\ref{thm:non-nested}.

\begin{proof}[Proof of Theorem~\ref{thm:non-nested}]
    The lower bound follows from the Construction given earlier in the section.
    For the upper bound, it suffices to show that if $G$ is a graph with $V(G)=[n]$ and $G$ does not contain a strongly non-nested $k$-matching, then $e(G)\le(k-1)n+\binom{k-1}{2}$.
    
    Observe that $G$ does not contain a crossing $k$-matching.
    Thus, by Corollary~\ref{col:crossing}, we have  
    $$|\{xy\in E(G):|x-y|\ge k\}|\le(k-1)n-\binom{2k-1}{2}+\binom{k}{2}.$$
    On the other hand, by Lemma~\ref{lem:non-nested}, we have
    $$|\{xy\in E(G):|x-y|\le k-1\}|\le2(k-1)^2.$$
    It follows that
    \begin{align*}
    e(G) & =|\{xy\in E(G):|x-y|\ge k\}|+|\{xy\in E(G):|x-y|\le k-1\}|\\
    & \le(k-1)n-\binom{2k-1}{2}+\binom{k}{2}+2(k-1)(k-1)\\
    & =(k-1)n+\binom{k-1}{2}.
    \end{align*}
\end{proof}

\subsection{The Tur\'an number of crossing versus separated matchings}

The following theorem establishes the maximum number of edges in an ordered graph, which does not contain a crossing or separated $k$-matching.

The important point we make here is that the upper bound for non-nested matching is smaller than the construction here, for $n\ge 2k^2$. 
In other words, there is no hope proving the tight non-nested extremal number if we only exclude 2 particular, although opposite cases.
The proof closely resembles that of Theorem~\ref{thm:non-nested}.

\begin{theorem}\label{thm:cross,sep}
    For every $n,k\in\mathbb N$ with $n\ge2k$ we have $$ex_<(n,\text{Cross-}\mathcal M_k,\text{Sep-}\mathcal M_k)\le(k-1)n-\binom{2k-1}{2}+\binom{k}{2}+(k-1)\left(\binom{k+1}{2}-1\right).$$    
    Furthermore, equality holds provided $n\ge 2k^2$.
\end{theorem}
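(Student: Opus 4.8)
The plan is to bound the edges of an extremal graph $G$ by splitting them according to length, exactly as in the proof of Theorem~\ref{thm:non-nested}. Call an edge $xy$ \emph{long} if $|x-y|\ge k$ and \emph{short} if $|x-y|\le k-1$. Since $G$ contains no crossing $k$-matching, Corollary~\ref{col:crossing} immediately gives $|\{xy\in E(G):|x-y|\ge k\}|\le(k-1)n-\binom{2k-1}{2}+\binom{k}{2}$, which is precisely the ``$(k-1)n-\binom{2k-1}{2}+\binom{k}{2}$'' term of the target. The only new ingredient needed for the upper bound is a companion lemma bounding the short edges using the hypothesis that $G$ has no \emph{separated} $k$-matching, and I would aim to prove $|\{xy\in E(G):|x-y|\le k-1\}|\le(k-1)\left(\binom{k+1}{2}-1\right)$. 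Adding the two bounds reproduces the claimed expression.

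To prove the short-edge lemma I would view each short edge $xy$ as the integer interval $[x,y]$. Since $k$ pairwise interval-disjoint short edges would constitute a separated $k$-matching, the short-edge intervals contain no family of $k$ pairwise disjoint members; hence, by the standard one-dimensional packing--covering duality (equivalently, the greedy ``earliest right endpoint'' argument for piercing intervals), they can all be pierced by a set $P$ of at most $k-1$ points. It then remains to count, for a fixed point $p$, the short edges whose interval contains $p$: writing $x=p-s$ and $y=p+t$ with $s,t\ge0$ and $1\le s+t\le k-1$, the number of admissible pairs is $\sum_{d=1}^{k-1}(d+1)=\binom{k+1}{2}-1$. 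Summing over the at most $k-1$ points of $P$ yields the bound $(k-1)\left(\binom{k+1}{2}-1\right)$. This step plays the role of Lemma~\ref{lem:non-nested}, with the pigeonhole replaced by interval piercing.

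For the lower bound (equality when $n\ge2k^2$) the plan is to exhibit a graph realizing both partial bounds simultaneously. I would combine a \emph{long-edge gadget}, namely the long edges of a Capoyleas--Pach extremal graph, which supplies exactly $(k-1)n-\binom{2k-1}{2}+\binom{k}{2}$ long edges with no crossing $k$-matching, with a \emph{short-edge gadget} consisting of $k-1$ clustered ``stars'' of short edges, each star comprising all $\binom{k+1}{2}-1$ short edges through a fixed point, so that the short count is met with equality. The clusters must be placed centrally and far from the boundary (this is where $n\ge2k^2$ enters), so that every edge of the whole graph meets a bounded central window; this keeps the maximum pairwise-disjoint family below $k$ and so forbids a separated $k$-matching, while the local structure of the long-edge gadget continues to forbid a crossing $k$-matching. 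A useful self-check is that any graph all of whose edges are incident to a fixed set of $k-1$ vertices already avoids both a crossing and a separated $k$-matching (at most $k-1$ disjoint edges can meet such a set), which gives the correct leading term $(k-1)n$ and the right heuristic for the clustering.

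The main obstacle is the equality construction rather than the upper bound. Concatenating the two gadgets is delicate because a short cluster sitting inside the span of several long edges could complete a crossing $k$-matching, and conversely the long edges of the Capoyleas--Pach gadget, which in isolation come with \emph{all} short edges, might create a separated $k$-matching once those short edges are removed and replaced by only $k-1$ clusters. I expect the bulk of the work to be checking that, for $n\ge2k^2$, the chosen placement simultaneously (i) keeps every pairwise-crossing family within the long-edge gadget of size at most $k-1$ even after adjoining the clusters, and (ii) forces every interval of the combined graph to hit a common central window of $k-1$ points, thereby capping pairwise-disjoint families at $k-1$. Verifying these two interaction properties, and confirming that the edge count hits the target $(k-1)n+3\binom{k}{3}$ exactly, is the crux; the counting itself is routine once the gadgets are fixed.
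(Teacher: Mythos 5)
Your upper bound argument is correct, and its second half differs from the paper's in a genuine way. The long-edge count is handled exactly as in the paper, via Corollary~\ref{col:crossing}. For the short edges, the paper fixes a length $\ell\le k-1$, partitions the length-$\ell$ edges of $K_n$ into $\ell+1$ sets of pairwise separated edges (by residue of the left endpoint mod $\ell+1$), and concludes $|\{xy\in E(G):|x-y|=\ell\}|\le(\ell+1)(k-1)$ by pigeonhole; summing over $\ell$ gives $(k-1)\left(\binom{k+1}{2}-1\right)$. You instead invoke interval packing--piercing duality: no $k$ pairwise disjoint short intervals, hence a stabbing set of at most $k-1$ points, and at most $\sum_{d=1}^{k-1}(d+1)=\binom{k+1}{2}-1$ short edges through each point. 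Both arguments are elementary and give the identical bound; yours has the advantage of predicting where the short edges of an extremal configuration must live (all through $k-1$ fixed points), which is exactly what the extremal example realizes.

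The gap is in the equality part, and it is worse than the ``delicate check'' you anticipate: your long-edge gadget is not well defined. ``The long edges of a Capoyleas--Pach extremal graph'' depends on which extremal graph you take, and for most choices the long edges alone already contain a separated $k$-matching, so no placement of short clusters can repair the construction. Concretely, for $k=2$ the extremal graphs for $ex_<(n,\text{Cross-}\mathcal M_2)$ containing all short edges are the polygon triangulations; take one whose diagonals include $\{2,4\}$ and $\{6,8\}$ (they do not cross, so such a triangulation exists). These are two disjoint long edges, i.e.\ a separated $2$-matching, before any clusters are attached. So one must choose the long edges with care, not arbitrarily.

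The paper's construction makes this choice so that both verifications become one-liners. Let $v_i=ik$ for $i\in[k-1]$ and
$$E(G)=\{xy:\ x\le k-1,\ |x-y|\ge k\}\ \cup\ \{xy:\ |x-y|\le k-1\ \text{and}\ x\le v_i\le y\ \text{for some}\ i\}.$$
The long edges, all incident to $[k-1]$, number exactly $(k-1)n-\binom{2k-1}{2}+\binom{k}{2}$ (this set is itself the long-edge part of one particular Capoyleas--Pach extremal graph), and the stars at $v_1,\dots,v_{k-1}$ contribute $(k-1)\left(\binom{k+1}{2}-1\right)$ short edges with no double counting, since consecutive $v_i$'s are $k$ apart while short edges have length at most $k-1$; fitting the last star requires $v_{k-1}+k-1=k^2-1\le n$, which is where the hypothesis on $n$ enters. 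Then: (i) every edge of a crossing $k$-matching has length at least $k$ (the observation behind Corollary~\ref{col:crossing}), so a crossing $k$-matching in $G$ would consist of long edges only, impossible as these all meet the $k-1$ vertices of $[k-1]$; and (ii) every edge of $G$ contains some $v_i$ (a long edge $xy$ has $x\le k-1<k=v_1$ and $y\ge x+k>k$), so the edges of a separated matching contain distinct $v_i$'s and there are at most $k-1$ of them. Note that your placement heuristic is inverted --- the stars are packed against the left end, not placed centrally, precisely so that every long edge passes through the first star. Your final sanity check is right, though: the constant term does equal $3\binom{k}{3}$.
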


\begin{proof}
    Firstly, we show that if $G$ is a graph with $V(G)=[n]$ and $G$ does not contain any crossing or separated $k$-matching, then $e(G)\le(k-1)n-\binom{2k-1}{2}+\binom{k}{2}+(k-1)(\binom{k+1}{2}-1)$.
  
    As $G$ does not contain a crossing $k$-matching, we can upper bound the number of long edges by Corollary~\ref{col:crossing} as follows:
    $$|\{xy\in E(G):|x-y|\ge k\}|\le(k-1)n-\binom{2k-1}{2}+\binom{k}{2}.$$
    Next, we bound the number of short edges from above.
    For a fixed $\ell\in\mathbb N$ with $\ell\le k-1$, observe that the edges $\{xy\in E(K_n):|x-y|=\ell\}$ of the complete graph $K_n$ with vertex set $[n]$ can be partitioned into $\ell+1$ separated matchings $M_1,\dots,M_{\ell+1}$.
    Since $G$ does not contain a separated $k$-matching, we conclude that $G$ contains at most $(k-1)$ edges from each $M_i$, and thus $|\{xy\in E(G):|x-y|=\ell\}|\le(\ell+1)(k-1)$.
    
    It follows that
    \begin{align*}
    e(G) & =|\{xy\in E(G):|x-y|\ge k\}|+|\{xy\in E(G):|x-y|\le k-1\}|\\
    & \le(k-1)n-\binom{2k-1}{2}+\binom{k}{2}+\sum_{\ell=1}^{k-1} (\ell+1)(k-1)\\
    & =(k-1)n-\binom{2k-1}{2}+\binom{k}{2}+(k-1)(\binom{k+1}{2}-1).
    \end{align*}

To see that equality holds for $n\ge 2k^2$, consider the graph $G$ on vertex set  $[n]$. Let $v_i=ik$ for $i=1, 2, \ldots, k-1$ and let 
$$E(G)=\{\  xy\ |\  x\le k-1, \ |x-y|\ge k\ \}
\cup\{\ xy\ |\ |x-y|\le k-1 \ {\mbox{and for some}}\ i, x\le v_i\le y\ \}.$$
Now
$e(G)=(k-1)n-\binom{2k-1}{2}+\binom{k}{2}+(k-1)(\binom{k+1}{2}-1)$.    
\end{proof}

\subsection{The Tur\'an number of alternating paths}

For any $t\in\mathbb N$, the alternating $t$-path $\text{Alt-}P_t$ is an ordered path on $t$ vertices such that every pair of edges (including incident pairs) are nested, see Figure~\ref{fig:altpath}.
Two edges $xc$ and $yc$ with a common end-vertex $c$ are nested if and only if $x<y<c$ or $y<x<c$.

\begin{figure}[h!]
\centering
\begin{tikzpicture}
\draw[very thick] (0,0) arc (135:45:7.06);
\draw[very thick] (0,0) arc (135:45:5.65);
\draw[very thick] (2,0) arc (135:45:4.24);
\draw[very thick] (2,0) arc (135:45:2.82);
\draw[very thick] (4,0) arc (135:45:1.41);

\fill (0,0) circle (3pt)   
(2,0) circle (3pt) 
(4,0) circle (3pt)
(6,0) circle (3pt)
(8,0) circle (3pt)
(10,0) circle (3pt); 
\end{tikzpicture}
\caption{An alternating path on~$6$ vertices.}
\label{fig:altpath}
\end{figure}
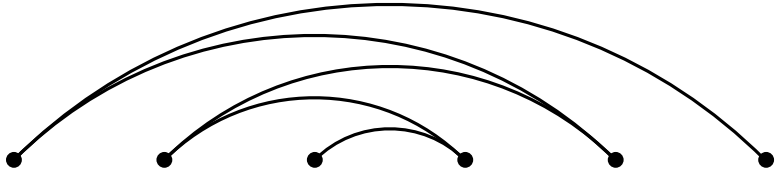

Recall that the maximum number of edges in an ordered graph with no nested $k$-matching was determined in Theorem~\ref{thm:nested}. Namely 
$$ex_<(n,\text{Nest-}\mathcal M_k)=2(k-1)n-(k-1)(2k-1).$$ 
In fact, our proof yields the same result for alternating paths with just minor modifications.

\begin{theorem}\label{thm:altpath}
For every $n,k\in\mathbb N$ with $n\ge 2k$ we have\footnote{For simplicity, we insist that the vertices of the alternating path are even in number.}
$$ex_<(n,\text{Alt-}P_{2k})=2(k-1)n-(k-1)(2k-1).$$    
\end{theorem}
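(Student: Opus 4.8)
The plan is to prove the matching upper and lower bounds separately, re-using the machinery already developed for nested matchings (Theorem~\ref{thm:nested}) and the shifting technique of Claim~\ref{claim:key}. The guiding observation is that an alternating $2k$-path and a nested $k$-matching, although different objects, are governed by the same extremal graph, namely the ``bandwidth'' graph $G^*=\{xy:|x-y|\le 2k-2\}$ with $V(G^*)=[n]$, which has $\sum_{d=1}^{2k-2}(n-d)=2(k-1)n-(k-1)(2k-1)$ edges.

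For the lower bound I would first record the elementary containment that every alternating $2k$-path contains a nested $k$-matching: if $v_1v_2\cdots v_{2k}$ is an alternating path with edges $v_iv_{i+1}$, then the odd edges $v_1v_2,v_3v_4,\dots,v_{2k-1}v_{2k}$ are pairwise disjoint and, being among the pairwise-nested edges of the path, form a nested $k$-matching. Hence any graph avoiding a nested $k$-matching also avoids an alternating $2k$-path, and the extremal construction of Theorem~\ref{thm:nested} (equivalently $G^*$, which has length $\le 2k-2$ on every edge and therefore cannot nest $2k$ distinct endpoints) already witnesses $ex_<(n,\text{Alt-}P_{2k})\ge 2(k-1)n-(k-1)(2k-1)$.

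For the upper bound I would adapt the shifting argument of Claim~\ref{claim:key}. Among all $n$-vertex graphs with the maximum number of edges and no alternating $2k$-path, I would pick one, $G$, minimising $\sum_{xy\in E(G)}|x-y|$, and claim that $G$ is \emph{nesting-closed}: $xy\in E(G)$ and $x\le a<b\le y$ imply $ab\in E(G)$. The proof is the verbatim analogue of Claim~\ref{claim:key}: if closure fails one applies the same length-shortening operation (replacing each edge $v(b{+}1)$ by $vb$ where admissible), obtaining a graph with the same number of edges but strictly smaller length-sum, hence containing an alternating $2k$-path, which one then ``un-shifts'' to an alternating $2k$-path already present in $G$. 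Once $G$ is nesting-closed, I would show every edge has length at most $2k-2$: if $xy\in E(G)$ with $y-x\ge 2k-1$, then $[x,y]$ contains at least $2k$ vertices and $G[\,[x,y]\,]$ is complete, so the vertices $x,y,x{+}1,y{-}1,\dots,x{+}(k{-}1),y{-}(k{-}1)$ (distinct since $y-x\ge 2k-1$) trace out an alternating $2k$-path, a contradiction. Therefore $G\subseteq G^*$ and $e(G)\le e(G^*)=2(k-1)n-(k-1)(2k-1)$, completing the proof.

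The main obstacle is the un-shifting step inside the reduction to nesting-closed graphs. Swapping a shifted edge $vb$ back to $v(b{+}1)$ moves an endpoint, which may collide with another vertex of the path or disturb the alternating/nesting pattern; in the matching case of Claim~\ref{claim:key} this is resolved by a short analysis of the one or two edges incident to $b$ and $b{+}1$, and the genuine work here is to verify that the \emph{connected}, longer alternating path admits the analogous local correction. By contrast, the length bound and the final edge count are routine, and the lower bound is immediate from the containment observation together with Theorem~\ref{thm:nested}.
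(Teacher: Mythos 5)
Your lower bound is exactly the paper's: the bandwidth graph of all edges of length at most $2k-2$, together with the observation that an alternating $2k$-path contains a nested $k$-matching, and your final edge count is correct. The problem is in the upper bound, at precisely the step you flag as ``the genuine work'': the un-shifting. The implication you need --- if the shifted graph contains a copy of $\text{Alt-}P_{2k}$, then so does $G$ --- is false, and no local correction in the spirit of Claim~\ref{claim:key} can exist. Concrete counterexample with $k=2$, $n=4$: let $G$ have vertex set $[4]$ and edges $\{1,4\},\{1,3\},\{2,4\}$. The only two alternating $4$-paths on $[4]$ are $\{14,13,23\}$ and $\{14,24,23\}$; both use the edge $23$, so $G$ is $\text{Alt-}P_4$-free (its three edges form the path $3\,1\,4\,2$, in which $13$ and $24$ cross). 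But $G$ is not nesting-closed ($14\in E(G)$, $23\notin E(G)$), and the shifting rule applied with $b=3$ (a legitimate instance, since $2\cdot 4\in E(G)$ and $2\cdot 3\notin E(G)$) replaces $24$ by $23$, producing $\{14,13,23\}$, which \emph{is} an alternating $4$-path. So the shift can create a copy of the forbidden path that cannot be pulled back. The structural reason the matching argument does not transfer: Claim~\ref{claim:key} only has to preserve \emph{non-separation}, which is monotone under lengthening an edge (any edge meeting $[v,b]$ also meets $[v,b+1]$, and the replacement edge $ub$ found via the survival condition still meets everything it must); nestedness is monotone in neither direction --- lengthening or shortening one edge can turn a nested pair into a crossing pair --- and in a path both $b$ and $b+1$ may simultaneously be path vertices of degree two, which is exactly the obstruction realised above. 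Appealing to extremality and minimality of $\sum|x-y|$ does not repair this: the repair you propose is purely local to the path and the shifted edges, and any such local argument would equally apply to the example, where no alternating path exists in $G$ at all.

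Note that this does not settle whether your nesting-closure claim is \emph{true} for some extremal graph (its conclusion is consistent: the bandwidth extremal graph is nesting-closed); it only shows your proposed proof of it collapses, and you would need a genuinely different argument. The paper avoids shifting entirely and instead uses an iterative peeling argument: starting from $G_0=G$, in round $i$ every vertex loses its edge to its furthest neighbour on the right (for $i$ odd) or on the left (for $i$ even); if $G_{2k-2}$ still had an edge, back-tracking through the rounds would exhibit an alternating $2k$-path in $G$, and since round $i$ deletes at most $n-i-1$ edges, one gets $e(G)\le 2(k-1)n-(k-1)(2k-1)$. If you want a correct proof, either adopt that layering argument or find a new route to nesting-closure; as written, your upper bound is not established.
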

\begin{proof}
Let~$H$ be a graph with vertex set~$[n]$, where $xy$ is an edge if and only if $|y-x|\le2k-2$.
Given a nested $k$-matching on~$[n]$, note that the edge $uv$ containing all other edges of the matching must satisfy $|u-v|\ge2k-1$.
Therefore, $H$ does not contain a nested $k$-matching.
Since an alternating $2k$-path contains a nested $k$-matching, we conclude that $H$ does not contain an alternating $2k$-path either.
It is easy to check that $e(H)=2(k-1)n-(k-1)(2k-1)$,
hence we have $ex_<(n,\text{Alt-}P_{2k})\ge 2(k-1)n-(k-1)(2k-1)$.
It remains to show that $ex_<(n,\text{Alt-}P_{2k})\le2(k-1)n-(k-1)(2k-1)$.

Let~$G$ be an arbitrary graph with vertex set $[n]$, which does not contain a copy of~$\text{Alt-}P_{2k}$ and with $e(G)=ex_<(n,\text{Alt-}P_{2k})$.
We define graphs $G_0,G_1,G_2\dots$ recursively  as follows.
We start with $G_0:=G$.
Given~$G_{i}$, for every $v\in[n]$ we pick $v^+\in[n]\setminus\{v\}$ such that $vv^+$is an edge in $G_i$ and
\begin{itemize}
    \item if $i$ is odd, $v^+>v$ and $v^+$ is as large as possible, 
    \item if $i$ is even, $v^+<v$ and $v^+$ is as small as possible.
\end{itemize}
Note that $v^+$ may not exist e.g., if $i$ is odd and $v$ has no neighbour on its right.
We let $G_{i+1}$ be the graph obtained by deleting the edges $vv^+$ from $G_i$ for every $v\in[n]$. 

Suppose that there exists an edge $x_0y_0$ in $G_{2k-2}$.
By construction, it follows that there exists $x_1\in[y_0+1,n]$ such that $x_0x_1$ is an edge in~$G_{2k-3}$.
Again, this implies there exists $x_2\in[1,x_0-1]$ such that $x_2x_1$ is an edge in~$G_{2k-4}$.
Repeatedly applying this argument yields vertices $x_1,\dots,x_{2k-2}$ where $\dots<x_4<x_2<x_0<y_0<x_1<x_3<x_5<\dots$ and $x_ix_{i+1}$ is an edge in $G$ for every $i\in[2k-2]$.
But then $y_0x_0x_1\dots x_{2k-2}$ is an alternating $2k$-path in $G$, a contradiction.

We conclude that the graph~$G_{2k-2}$ must not have any edge.
Now, when we obtain $G_{i+1}$ from $G_i$, we delete at most one edge per vertex, namely the edge $vv^+$ for vertex $v$.
We do not delete any edge when considering the largest non-isolated vertex (for $i$ odd) or the smallest non-isolated vertex (for $i$ even).
Furthermore, it is easy to see that $G_i$ has at least $i$ isolated vertices (namely, $\{1,n,2,n-1,\dots\}$).
We conclude that $e(G_{i+1})\ge e(G_{i}) - (n - i - 1)$ for every $i$.
Hence, we have $0=e(G_{2k-2})\ge e(G_0) -2n(k-1)-(k-1)(2k-1)$ and so $e(G)\le 2(k-1)n-(k-1)(2k-1)$.
This implies $ex_<(n,\text{Alt-}P_{2k})\le 2(k-1)n -(k-1)(2k-1)$, as required.
\end{proof}

\section{Applications to Ramsey-type problems}\label{sec:appRamsey}

\subsection{Alternating paths}

Balko et. al (see Proposition 3 in \cite{bckk}) proved the following, where $R_<(\text{Alt-}P_t)$ is the smallest number of vertices of an ordered complete graph such that any $2$-edge-colouring yields a monochromatic copy of $\text{Alt-}P_t$.
$$R_<(\text{Alt-}P_t)\le2t-3+\sqrt{2t^2-8t+11}=(2+\sqrt2)t+o(t)\approx3.4t.$$
We can improve this upper bound using Theorem~\ref{thm:altpath}.

\begin{theorem}\label{thm:RamseyAltPaths}
For every $t\in\mathbb N$, we have
$$R_<(\text{Alt-}P_{t})\le3t+3.$$    
\end{theorem}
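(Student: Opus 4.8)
The plan is to fix a $2$-edge-colouring of the ordered complete graph on $N:=3t+3$ vertices and to produce a monochromatic copy of $\text{Alt-}P_t$. Since $\text{Alt-}P_t$ is a subgraph of $\text{Alt-}P_{t+1}$, the quantity $R_<(\text{Alt-}P_t)$ is non-decreasing in $t$, so it suffices to treat even $t=2k$, the odd case following by passing to $t+1$. The naive first attempt is to apply Theorem~\ref{thm:altpath} to each colour class: if neither colour contains $\text{Alt-}P_{2k}$, then each has at most $2(k-1)N-(k-1)(2k-1)$ edges, whence $\binom N2\le 2\bigl(2(k-1)N-(k-1)(2k-1)\bigr)$. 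Solving this quadratic only yields $N$ of order $(2+\sqrt2)t$, which merely recovers the bound of Balko et al. Thus a pure edge-counting argument is insufficient, and the improvement of the constant to $3$ must exploit the finer structure of alternating paths.

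The structural observation I would build on is that the edges of an alternating path have \emph{strictly increasing spans}, writing the span of an edge $xy$ as $|x-y|$: walking along $y_0x_0x_1\cdots$, each edge nests strictly over its predecessor, so the spans strictly increase along the $2k-1$ edges. In the \emph{complete} host graph this has a strong converse: any strictly increasing sequence of $2k-1$ spans $\ell_0<\cdots<\ell_{2k-2}$ is realisable as an alternating $2k$-path, by taking nested intervals $[x_0,y_0]\subset[x_0,y_1]\subset[x_2,y_1]\subset\cdots$ with the prescribed consecutive lengths, since all the needed edges are automatically present. Hence the obstruction to a monochromatic path is essentially about how many distinct spans a colour can \emph{control}: a colour that controls $2k-1$ spans with enough freedom to choose nested endpoints already contains $\text{Alt-}P_{2k}$. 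As the host uses all $N-1$ spans, and the short spans (say $\ell\le 2t$) each carry roughly $N$ edges and so are owned in majority by one of the two colours, a pigeonhole over these spans should hand one colour a large family of densely represented spans — the mechanism that ought to beat the double count.

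The main work, and the principal obstacle, is to convert ``a colour controls many spans'' into an actual monochromatic alternating path when that colour is \emph{not} complete, so that a given span may be represented only by adversarially placed edges and the endpoints cannot be chosen freely. To handle this I would run the peeling procedure from the proof of Theorem~\ref{thm:altpath} on the majority colour, recalling that at each round it extends the partial path to its \emph{farthest} available neighbour on the correct side; building outward from an innermost short edge, each surviving round consumes one fresh, strictly larger span. Each colour class is exhausted within $2k-2$ rounds, so the crux is to show that a colour forced to be dense on $2k-1$ distinct short spans cannot stall before round $2k-2$, i.e.\ that a far monochromatic neighbour is always available while a nested chain of $2k-1$ edges is assembled. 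Balancing this round budget against the $N-1$ available spans is exactly where the constant $3$, rather than $2+\sqrt2$, should appear; I would then verify the arithmetic at $N=3t+3$, settle the small-$t$ and parity cases directly, and confirm that the greedy chaining never stalls, which is the step most likely to require care.
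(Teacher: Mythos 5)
Your reduction to even $t$ and your diagnosis that naive double counting only gives $(2+\sqrt2)t$ are both correct, but the proof stops exactly where it would need to start. The entire mechanism you propose --- a pigeonhole over short spans handing one colour ``majority ownership'' of $2k-1$ spans, followed by a stall-free run of the peeling procedure from Theorem~\ref{thm:altpath} --- is never carried out; you yourself flag it as ``the step most likely to require care'', and it faces a genuine obstacle: owning at least half of the edges of each of many spans says nothing about whether those edges can be nested over one another. To extend a partial alternating path whose current outer edge is $xy$ you need a monochromatic edge $uv$ with $u<x<y<v$, so edges of successive spans must be concentrically aligned; an adversary can give a colour a majority of every span while keeping these majorities pairwise unalignable (its span-$\ell$ edges can sit in a region far from its span-$\ell'$ edges). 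Moreover, the peeling procedure applied to a single colour class only reproves the bound $e(G)\le 2(k-1)m-(k-1)(2k-1)$, so running it on the majority colour cannot by itself give anything beyond $(2+\sqrt2)t$; no argument is given for how the span information would be injected into it, nor any computation showing where the constant $3$ would emerge.

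For comparison, the paper's extra idea is quite different and concerns exceptional \emph{edges} rather than spans: the edges $xy$ with $x+y\le 2k-2$ or $x+y\ge 2m-2k+4$ (the two ``corners'' of the ordered complete graph) cannot lie in \emph{any} copy of $\text{Alt-}P_{2k}$, coloured or not. Indeed, every edge of an alternating $2k$-path lies in a nested $(k-1)$-matching $M$, and for a corner edge $xy$ with $x<y$ and $x+y\le 2k-2$ the edges of $M$ nesting over $xy$ each meet $[x-1]$ while those nested under it lie in $[x,y]$, so $e(M)\le(x-1)+\frac{y-x+1}{2}=\frac{x+y-1}{2}<k-1$, a contradiction. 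Hence all $4\binom{k-1}{2}$ corner edges may be handed to the majority colour for free; that colour then has at least $\frac{1}{2}\left(\binom{m}{2}+4\binom{k-1}{2}\right)$ edges and still no $\text{Alt-}P_{2k}$, and a single application of Theorem~\ref{thm:altpath} forces $m\le 6k=3t$. This additive boost of order $t^2$ is precisely what moves the root of the quadratic from $(2+\sqrt2)t$ down to $3t$; your proposal contains no counterpart of it, so the improvement over the Balko et al.\ bound is not established.
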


\begin{proof}
    Note that it suffices to show that $R_<(\text{Alt-}P_{t})\le3t$ if $t$ is even.
    Indeed, if $t$ is odd, it follows that $R_<(\text{Alt-}P_{t})\le R_<(\text{Alt-}P_{t+1})\le3(t+1)=3t+3$.

    Let $t=2k$ for some $k\in\mathbb N$ and let $m:=R_<(\text{Alt-}P_{2k})-1$.
    By definition of $m$, there exists a red/blue edge-coloured complete graph $K_m$ with vertex set $[m]$ which does not contain a monochromatic copy of $\text{Alt-}P_{2k}$.
    Note that $R_<(\text{Alt-}P_{2k})\ge|V(\text{Alt-}P_{2k})|=2k$ and so $m\ge2k-1$.

    Let $A:=\{xy\in E(K_m):x+y\le 2k-2\}$ and $B:=\{xy\in E(K_m):x+y\ge 2m-2k+4\}$.
    Observe that 
    \begin{align*}
    |A|=|\{xy\in E(K_m):x+y\le 2k-2\}| &=(2k-4)+(2k-6)+\dots+2\\
    & =2[(k-2)+(k-3)+\dots+1]=2\binom{k-1}{2}
    \end{align*}
    and similarly
    $$|B|=|\{xy\in E(K_m):x+y\ge 2m-2k+4\}|=2\binom{k-1}{2}.$$
    Furthermore, $A\cap B=\emptyset$.
    Indeed, we cannot have $2m-2k+4\le x+y\le 2k-2$ since $m\ge2k-1$.

    It follows that
    $$|E(K_m)\setminus(A\cup B)|=\binom{m}{2}-|A|-|B|=\binom{m}{2}-4\binom{k-1}{2}.$$
    By the pigeonhole principle, at least half of the edges in $E(K_m)\setminus(A\cup B)$ have the same color, say red.

    The key observation is that edges in $A\cup B$ can be arbitrarily recolored without creating a monochromatic copy of $\text{Alt-}P_{2k}$.
    Indeed, suppose there exists a copy of $\text{Alt-}P_{2k}$ in the complete uncolored graph with vertex set $[m]$, which contains some edge $xy\in A$ with $x<y$.
    Since a copy of $\text{Alt-}P_{2k}$ can be decomposed into two edge-disjoint nested matchings of $k$ and $k-1$ edges respectively, we conclude that there exists a nested $(k-1)$-matching $M$ using the edge $xy$.
    Now, every edge of $M$, which lies above $xy$ must intersect $[x-1]$, while $xy$ and every edge of $M$ below $xy$ must lie in $[x,y]$.
    Therefore, we have
    $$e(M)\le|[x-1]|+\frac{|[x,y]|}{2}\le(x-1)+\frac{y-x+1}{2}=\frac{y+x-1}{2}<k-1$$
    where the last inequality follows from the assumption that $xy\in A$ and so $x+y\le 2k-2$.
    Hence $M$ has strictly less than $k-1$ edges, a contradiction.
    By symmetry, an analogous argument works for $xy\in B$.

    We conclude that we can recolor the edges in $A\cup B$ with red without creating a monochromatic alternating $2k$-path.
    Let $G$ be the graph with vertex set $[m]$ whose edges are the red edges in $K_m$.
    Hence, we have
    $$e(G)=|A|+|B|+\frac{1}{2}\left(\binom{m}{2}-|A|-|B|\right)=\frac{1}{2}\left(\binom{m}{2}+4\binom{k-1}{2}\right).$$
    Note that $G$ does not contain a copy of an alternating $2k$-path, as that would be a red monochromatic copy in $K_m$.
    Therefore, we have
    $$e(G)\le ex_<(n,\text{Alt-}P_{2k})=2(k-1)m-(k-1)(2k-1).$$

    Hence 
    $$\frac{1}{2}\left(\binom{m}{2}+4\binom{k-1}{2}\right)\le 2(k-1)m-(k-1)(2k-1)$$
    which implies $m\le6k=3t$.
    That is, $R_<(\text{Alt-}P_{t})\le3t$ for $t$ even, as required.
\end{proof}

\begin{remark}
We learnt of simultaneous and independent work by Gaurav Kucheriya, Allan Lo, Jan Petr, Amedeo Sgueglia and Jun Yan on the Ramsey problem for the alternating path.
They obtained an even stronger upper bound than Theorem~\ref{thm:RamseyAltPaths}, replacing the~$3$ in the leading term with $2+\sqrt{2}/2$.
\end{remark}

\subsection{Non-nested matchings}

In~\cite{bgyt}, we studied the 2-coloring of the edges of the complete ordered graph on $[n]$. We asked the minimum number $n$ such that in any 2-coloring of the edges there is a monochromatic non-nested $k$-matching.
The current best bounds for this Ramsey question are as follows: 
$$3k-1\le R_<(\neg\text{Nest-}\mathcal M_k)\le4k-6,$$
where the lower bound is conjectured to be tight~\cite{bgyt} and the upper bound is unpublished~\cite{et25}.
The next lemma asserts that if the Tur{\'a}n question for non-nested matching behaves as Conjecture~\ref{conj:nn}, then one obtains a better upper bound for the analogous Ramsey question.

\begin{lemma}\label{lemma:nnestedRamsey}
    Suppose that $ex_<(n,\neg\text{Nest-}\mathcal M_k)=(k-1)n$ for every $n,k\in\mathbb N$ with $n\ge2k$.
    Then $R_<(\neg\text{Nest-}\mathcal M_k)\le(2+\sqrt{3})k$.
\end{lemma}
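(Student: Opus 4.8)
The plan is to adapt the strategy of the proof of Theorem~\ref{thm:RamseyAltPaths} to the non-nested setting. Write $m:=R_<(\neg\text{Nest-}\mathcal M_k)-1$, so that there is a red/blue colouring of the complete graph on vertex set~$[m]$ containing no monochromatic non-nested $k$-matching. We may assume $m\ge 2k$, since otherwise $R_<(\neg\text{Nest-}\mathcal M_k)\le 2k\le(2+\sqrt3)k$ and we are done; this assumption also lets us invoke the hypothesis $ex_<(m,\neg\text{Nest-}\mathcal M_k)=(k-1)m$. As in the alternating-path argument, I would isolate a set~$S$ of edges that may be recoloured for free, recolour all of~$S$ to whichever colour is in the majority among the remaining edges, and then bound the edge count of the resulting monochromatic graph by the assumed Tur\'an value.

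The hard part, and the only genuinely new ingredient, is identifying the correct set~$S$. In contrast with the alternating-path case, here the right choice is to take~$S$ to be the set of \emph{long} edges, namely those $xy$ with $x<y$ and $y-x\ge m-k+1$. The key observation is that such an edge lies in no non-nested $k$-matching of the complete graph on~$[m]$: if $xy$ belonged to one, then each of the other $k-1$ edges, being crossing or separated with respect to $xy$, would use at least one vertex outside the interval $[x,y]$; as these $k-1$ edges are pairwise disjoint they would need $k-1$ distinct such vertices, yet only $m-(y-x)-1\le k-2$ vertices lie outside $[x,y]$, a contradiction. A direct count gives $|S|=\sum_{\ell=m-k+1}^{m-1}(m-\ell)=\binom{k}{2}$. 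Since any non-nested $k$-matching avoids~$S$ entirely, recolouring the edges of~$S$ cannot create a monochromatic non-nested $k$-matching.

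With~$S$ in hand the argument closes in the same way. Choosing the majority colour among the non-long edges and recolouring all of~$S$ to it yields a monochromatic graph~$G$ on~$[m]$ with no non-nested $k$-matching and with $e(G)\ge\binom{k}{2}+\tfrac12\bigl(\binom{m}{2}-\binom{k}{2}\bigr)=\tfrac12\binom{m}{2}+\tfrac12\binom{k}{2}$. The Tur\'an hypothesis gives $e(G)\le(k-1)m$, so $\binom{m}{2}+\binom{k}{2}\le 2(k-1)m$, which rearranges to $m^2-(4k-3)m+(k^2-k)\le 0$. Solving this quadratic and checking that its larger root is at most $(2+\sqrt3)k-1$ (equivalently, that $(2\sqrt3\,k+1)^2>12k^2-20k+9$) gives $m\le(2+\sqrt3)k-1$, whence $R_<(\neg\text{Nest-}\mathcal M_k)=m+1\le(2+\sqrt3)k$. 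I expect the safe-edge observation of the previous paragraph to be the crux; once the $\binom{k}{2}$ longest edges are recognised as recolourable, the counting and the resulting quadratic are routine and run exactly parallel to the alternating-path proof.
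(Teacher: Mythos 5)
Your proof is correct and follows essentially the same route as the paper's: the paper likewise identifies the $\binom{k}{2}$ long edges $xy$ with $|x-y|\ge n-k+1$ as lying in no non-nested $k$-matching (by the same vertex-counting argument outside $[x,y]$), recolours them to the majority colour of the remaining edges, and compares $\tfrac12\bigl(\binom{n}{2}+\binom{k}{2}\bigr)$ against the assumed Tur\'an bound $(k-1)n$ to obtain the same quadratic $n^2-(4k-3)n+k^2-k\le 0$. Your write-up is in fact slightly more careful in two minor places: you explicitly check $m\ge 2k$ before invoking the hypothesis, and your estimate of the larger root gives $m\le(2+\sqrt3)k-1$, hence the stated bound exactly, whereas the paper concludes only $n\le(2+\sqrt3)k$ for $n=R_<(\neg\text{Nest-}\mathcal M_k)-1$.
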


\begin{proof}
    Let $n:=R_<(\neg\text{Nest-}\mathcal M_k)-1$, hence there exists a red/blue colouring of the complete graph $K_n$ on vertex set $[n]$ such that there is no monochromatic non-nested $k$-matching.

    Observe that 
    $$|\{xy\in E(K_n):|x-y|\ge n-k+1 \}|=1+2+\dots+(k-1)=\binom{k}{2}$$ 
    and
    $$|\{xy\in E(K_n):|x-y|< n-k+1 \}|=\binom{n}{2}-\binom{k}{2}.$$
    By the pigeonhole principle, we may assume that there are at least $\frac{1}{2}(\binom{n}{2}-\binom{k}{2})$ red edges $xy$ such that $|x-y|<n-k+1$.
    Now, observe that a non-nested $k$-matching in $K_n$ cannot use any edge $xy$ such that $|x-y|\ge n-k+1$.
    Indeed, if $x<y$, then the remaining $k-1$ edges of the matching would be incident to vertices $v$ with $v<x$ or $v>y$, but there are only $k-2$ such vertices.

    We conclude that we can recolor the edges $xy$ satisfying $|x-y|\ge n-k+1$ with red without creating a monochromatic red non-nested $k$-matching.
    Let $G$ be the graph with vertex set $[n]$ whose edges are precisely the red edges in $K_n$.
    We have
    $$e(G)\le\frac{1}{2}\left(\binom{n}{2}-\binom{k}{2}\right)+\binom{k}{2}=\frac{1}{2}\left(\binom{n}{2}+\binom{k}{2}\right).$$
    In particular, $G$ does not contain a non-nested $k$-matching, and thus $e(G)\le ex_<(n,\neg\text{Nest-}\mathcal M_k)=(k-1)n$.
    It follows that
    $$\frac{1}{2}\left(\binom{n}{2}+\binom{k}{2}\right)\le n(k-1)$$
    and so $n^2-(4k-3)n+k^2-k\le0$.
    Since $3n\ge k$, we have $n^2-4kn+k^2\le0$, which implies $n\le(2+\sqrt{3})k$ as required.
\end{proof}

\section{Discussion}\label{sec:conclusion}

We conclude with some final open questions and remarks.
The most natural open problem is to fully settle the Tur\'an question for non-nested $k$-matchings.
Recall our main conjecture.

\nnconj*

Confirming Conjecture~\ref{conj:nn} would immediately yield better upper bounds for the analogous Ramsey question for non-nested matchings, as showed in Lemma~\ref{lemma:nnestedRamsey}.
In fact, it is easy to see from the proof of Lemma~\ref{lemma:nnestedRamsey} that replacing the $\binom{k-1}{2}$ term in the upper bound of Theorem~\ref{lemma:nnestedRamsey} with a sufficiently better constant would also yield non-trivial improvements for the Ramsey question.
In the following subsections, we list some additional open problems and observations.

\subsection{Tur\'an numbers of two or three forbidden patterns}

Recall Theorem~\ref{thm:cross,sep} determines the exact maximum number of edges of an ordered $n$-vertex graph containing no crossing or separated $k$-matching, provided $n\ge 2k^2$.
It is natural to consider the analogous question for other ordered matchings.

For $n\ge2k$, we have
$$ex_<(n,\{\text{Nest-}\mathcal M_k,\text{Sep-}\mathcal M_k\})=ex_<(n,\text{Nest-}\mathcal M_k)=2(k-1)n-(k-1)(2k-1).$$
To show this, it suffices to exhibit a graph with no nested or separated $k$-matching, and with $2(k-1)n-(k-1)(2k-1)$ edges. 
Consider the ordered graph $G$ with vertex set $[n]$ whose edge set consists of all edges incident to $[k-1]\cup[n-k+2,n]$.
It is easy to prove that $G$ satisfies the required properties.

On the other hand, the Tur\'an number $ex_<(n,\{\text{Nest-}\mathcal M_k,\text{Cross-}\mathcal M_k\})$ seems harder to determine.
A trivial lower bound is $ex_<(n,\{\text{Nest-}\mathcal M_k,\text{Cross-}\mathcal M_k\})\ge ex_<(n,\neg\text{Sep-}\mathcal M_k)$, the exact value of the right hand-side is given by Theorem~\ref{thm:nonsep}.
It is possible this is tight.

\begin{question}
Is it the case that $ex_<(n,\{\text{Nest-}\mathcal M_k,\text{Cross-}\mathcal M_k\})=ex_<(n,\neg\text{Sep-}\mathcal M_k)$? 
\end{question}

Another interesting question is to determine the maximum number of edges when forbidding a nested, crossing and separated $k$-matchings.
For $n\ge2k$ and $k\ge3$, we have 
$$(k-1)n\le ex_<(n,\{\text{Nest-}\mathcal M_k,\text{Cross-}\mathcal M_k,\text{Sep-}\mathcal M_k\})\le(k-1)n+O(k^3).$$
The upper bound follows immediately from Theorem~\ref{thm:cross,sep}, whereas the lower bound follows from the following construction.
Let $G$ be an ordered graph with vertex set $[n]$.
Let the edge set of $G$ consists of the union of all edges incident to the vertices $\{1\}\cup[k+1,2k-2]$ and the edges lying in $[2k-2]\cup\{n\}$.
Then $G$ contains no separated, nested or crossing $k$-matching (for $k\ge3$) and $G$ has precisely $(k-1)n$ edges.
This could be tight.

\begin{question}
Do we have $ex_<(n,\{\text{Nest-}\mathcal M_k,\text{Cross-}\mathcal M_k,\text{Sep-}\mathcal M_k\})=(k-1)n$ for $k\ge3$? 
\end{question}

\subsection{A counter-intuitive behaviour of the interval chromatic number}

Theorem~\ref{thm:PachTardos} by Pach and Tardos states that the interval chromatic number~$\chi_<(H)$ governs the asymptotic behaviour of~$ex_<(n,H)$.
In particular, if $\chi_<(H_1)<\chi_<(H_2)$ then (asymptotically) we have $ex_<(n,H_1)>ex_<(n,H_2)$.
Recall that we showed the following in Theorem~\ref{thm:cross,sep}:
$$ex_<(n,\text{Cross-}\mathcal M_k,\text{Sep-}\mathcal M_k)=(k-1)n+\Theta(k).$$
Here, $\chi_<(\text{Cross-}\mathcal M_k)=2$ and $\chi_<(\text{Sep-}\mathcal M_k)=k+1$.
Now, let $\mathcal M^*_k$ denote the disjoint union of two crossing matchings (possibly empty), say $\mathcal M_1$ and $\mathcal M_2$, such that $|\mathcal M_1|+|\mathcal M_2|=k$ and such that every pair of edges $e_1\in\mathcal M_1$ and $e_2\in\mathcal M_2$ are separated.
Formally, $\mathcal M^*_k$ is a family of ordered graphs, all of which have interval chromatic number $3$, with the exception of the crossing $k$-matching, which has interval chromatic number $2$.

In light of the previous consideration on the interval chromatic number, one may think that $ex_<(n,\mathcal M^*_k)$ should be (asymptotically) at most $ex_<(n,\text{Cross-}\mathcal M_k,\text{Sep-}\mathcal M_k)=(k-1)n+\Theta(k)$.
Indeed, while in both cases we are forbidding a crossing $k$-matching, in the former we forbid many ordered graphs with low interval chromatic number, whereas in the latter we forbid only one additional ordered graph with arbitrary large interval chromatic number.
Perhaps surprisingly, this turns out to be false: we have $ex_<(n,\mathcal M^*_k)\ge\left(k-2^{-(k-2)}\right)n + \Theta(k)$.

To prove this, it suffices to exhibit an extremal construction.
Let $G$ be a graph with vertex set~$[n]$ and $E(G)=E_1\cup E_2$ where $E_1$ is the set of edges incident to $\{1,2,\dots,k-2\}$ and
$$E_2:=\left\{xy\in\binom{[n]}{2}:|x-y|=2^i<k/2 \text{ and } x=t2^i+1 \text{ for some integers $i,t\ge 0$}\right\}.$$
Firstly, we have 
\begin{align*}
e(G)=|E_1\cup E_2|&=|E_1|+|E_2|-|E_1\cap E_2|\\
&=(k-2)n +(1+1/2+\dots+1/2^{\lfloor(k-1)/2\rfloor})n+\Theta(k) \\
&\ge\left(k-2^{-(k-2)}\right)n + \Theta(k).
\end{align*}
Finally, suppose for a contradiction that $G$ contains a copy of $\mathcal M^*_k$, say with $|\mathcal M_1|\ge |\mathcal M_2|$.
All edges $xy$ in $E_2$ satisfy $|x-y|<k/2\le|\mathcal M_1|$, hence $\mathcal M_1$ does not contain any edge in $E_2$ (since such edges cannot be used in a large crossing matching).
Also, no two edges in $E_2$ are crossing, hence $\mathcal M_2$ has at most one edge in $E_2$.
It follows that at least $k-1$ edges of $\mathcal M$ must lie in $E_1$, but this is not possible since all edges in $E_1$ are incident to $\{1,2,\dots,k-2\}$.

\subsection{`Crossing' and `nested' are not quite interchangeable}

In~\cite{kathe}, it was proved that there exists a bijection between ordered $n$-vertex graphs on $m$ edges which (A) do not contain a crossing $k$-matching and (B) do not contain a nested $k$-matching.      
In particular, this implies $ex_<(n,\text{Cross-}\mathcal M_k)=ex_<(n,\text{Nest-}\mathcal M_k)$.

The intuition that the terms `crossing' and `nested' are interchangeable is consistent with Kupitz's result that $ex_<(n,\neg\text{Cross-}\mathcal M_k)=(k-1)n$ and our conjecture that $ex_<(n,\neg\text{Nest-}\mathcal M_k)=(k-1)n$.
However, the following is an example, where interchanging the two terms produces different results.

Let $\mathcal M^{**}_k$ denote the disjoint union of two nested matchings (possibly empty), say $\mathcal M_1$ and $\mathcal M_2$, such that $|\mathcal M_1|+|\mathcal M_2|=k$ and such that every pair of edges $e_1\in\mathcal M_1$ and $e_2\in\mathcal M_2$ are separated.
Following Kupitz's proof of $ex_<(n,\neg\text{Cross-}\mathcal M_k)=(k-1)n$, one can show that $ex_<(n,\mathcal M^{**}_k)=(k-1)n$. 

If we replace `nested' with `crossing' in the definition of $\mathcal M^{**}_k$, we recover the definition of $\mathcal M^*_k$ from the previous subsection.
However, we showed that $ex_<(n,\mathcal M^*_k)\ge\left(k-2^{-(k-2)}\right)n + \Theta(k)$, which is significantly larger than $(k-1)n$.


\appendix

\section{Appendix}

Here, we show a self-contained proof of Theorem~\ref{dwsepar}.

\begin{theorem} \label{thm:sep}
For every $n,k\in\mathbb N$ with $n\ge 2k$ we have
$$ex_<(n,\text{Sep-}\mathcal M_k)
= T(n+1,k)-k+1.$$      
\end{theorem}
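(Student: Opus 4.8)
The plan is to reduce the whole statement to a piercing question about intervals. Regard each edge $xy$ with $x<y$ as the closed integer interval $[x,y]$; then a separated matching is precisely a family of pairwise disjoint such intervals, so $G$ on $[n]$ contains a $\text{Sep-}\mathcal M_k$ if and only if the interval family $\{[x,y]:xy\in E(G)\}$ has $k$ pairwise disjoint members. By the classical min--max duality for intervals (the maximum number of pairwise disjoint intervals equals the minimum number of points meeting every interval, obtained greedily by repeatedly stabbing the interval with smallest right endpoint at that endpoint), $G$ avoids a separated $k$-matching if and only if there exist $k-1$ points $p_1<\dots<p_{k-1}$ — which the greedy procedure lets us take at vertices of $[n]$ — such that every edge $[x,y]$ satisfies $p_j\in[x,y]$ for some $j$. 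This structural characterization is what I would build everything on.

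With this in hand I would first prove the lower bound by exhibiting the natural extremal graph. Choose $k-1$ separator vertices $v_1<\dots<v_{k-1}$ so that they split the remaining $n-(k-1)$ vertices into $k$ consecutive blocks $B_1,\dots,B_k$ of as equal size as possible, and let $G$ consist of every edge pierced by some $v_j$, i.e.\ every edge except those with both endpoints inside a common block. By construction each edge of $G$ is stabbed by some $v_j$, so by the characterization $G$ has no separated $k$-matching, and $e(G)=\binom n2-\sum_{i=1}^k\binom{|B_i|}2$ with $\sum_i|B_i|=n-k+1$.

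For the upper bound, take any $G$ avoiding a separated $k$-matching and fix $k-1$ piercing points. Every edge of $G$ is pierced, so $e(G)$ is at most the number of edges of $K_n$ pierced by these points, which equals $\binom n2-\sum_B\binom{|B|}2$, where the blocks $B$ are the maximal runs of vertices lying strictly between consecutive points (a point placed at a vertex is removed and contributes nothing). Since $k-1$ points create at most $k$ blocks and remove at most $k-1$ vertices, we have $\sum_B|B|\ge n-k+1$ with at most $k$ blocks; because $\sum_B\binom{|B|}2$ only increases when a vertex is added to a block or two blocks are merged, this sum is minimised — and the pierced count maximised — by removing exactly $k-1$ vertices and splitting the remaining $n-k+1$ into $k$ balanced blocks. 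Thus both bounds meet at $\binom n2-\min\sum_{i=1}^k\binom{b_i}2$, the minimum over balanced partitions $b_1+\dots+b_k=n-k+1$.

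It remains to identify this common value with $e(T(n+1,k))-(k-1)$, which is the one genuinely computational step. The clean way is the shift $c_i:=b_i+1$: if $(b_i)$ is a balanced $k$-partition of $n-k+1$ then $(c_i)$ is a balanced $k$-partition of $n+1$, hence realises $T(n+1,k)$, and using $\binom c2-\binom{c-1}2=c-1$ one gets $\sum_i\binom{c_i}2-\sum_i\binom{b_i}2=\sum_i(c_i-1)=n-k+1$. Combining this with $\binom{n+1}2-\binom n2=n$ and $e(T(n+1,k))=\binom{n+1}2-\sum_i\binom{c_i}2$ yields $\binom n2-\sum_i\binom{b_i}2=e(T(n+1,k))-(k-1)$, as required. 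The obstacle I anticipate is the bookkeeping in the characterization: a piercing point at a vertex simultaneously covers every edge through that vertex and separates its two sides, so one must track that such a point both removes a vertex and acts as a block boundary — getting this accounting right is exactly what forces the $n+1$ (rather than $n$) and the $-(k-1)$ correction to appear.
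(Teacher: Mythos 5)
Your proof is correct, and its core coincides with the paper's argument, though you package it quite differently. The paper never names the packing--piercing duality: it takes a maximum separated matching $x_1y_1,\dots,x_ty_t$ (with $t\le k-1$) chosen so that $y_1+\dots+y_t$ is minimal, and proves that no edge of $G$ lies strictly between consecutive $y_i$'s --- which is precisely your statement that the right endpoints of this matching pierce every edge, and this minimal-sum maximum matching is exactly what your greedy stabbing procedure produces, so the ``key lemma'' is the same fact in two guises. Where you genuinely differ is in what is done with the piercing set. The paper deletes $y_1,\dots,y_t$, observes that what remains is $k$-partite, and bounds $e(G)\le e(T(n-k+1,k))+(\text{edges at the deleted vertices})$, leaving the final identification with $e(T(n+1,k))-k+1$ as ``easy to check''; as written this step is tailored to $t=k-1$ and glosses over the case $t<k-1$. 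You instead count the pierced edges of $K_n$ exactly, namely $\binom{n}{2}-\sum_B\binom{|B|}{2}$, dispose of configurations with fewer than $k-1$ points by monotonicity of this count, reduce to the balanced partition by convexity, and verify the closed form explicitly via the shift $c_i=b_i+1$ --- a computation the paper omits. Your write-up also supplies the extremal construction (all edges stabbed by $k-1$ balanced separators), which the paper's appendix proof does not contain at all, since it only argues the upper bound. So the two proofs rest on the same structural fact, but yours is self-contained on both sides of the equality and handles the optimization and degenerate cases more carefully.
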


\begin{proof}
Let~$G$ be an arbitrary graph with vertex set $[n]$ which does not contain a copy of $\text{Sep-}\mathcal M_k$ and with $e(G)=ex_<(n,\text{Sep-}\mathcal M_k)$.
Let $\{x_1y_1,x_2y_2,\dots,x_t y_t\}$ be a separated $t$-matching in~$G$ where $x_1<y_1<x_2<y_2<\dots<x_t<y_t$.
We pick such matching so that~$t$ is as large as possible and, subject to this, the sum $y_1+\dots+y_t$ is as small as possible.

By assumptions, we have~$t\le k-1$.
For every integer $t<i\le k$, we set $y_i:=n+1$ and $y_0:=0$.
Suppose for some integer $0\le i\le k-1$, there is an edge~$e$ of~$G$ lying in $[y_i+1,y_{i+1}-1]$.
If $i<t$, then we can replace the edge~$x_{i+1}y_{i+1}$ with~$e$ to obtain a new separated $\ell$-matching with a smaller $y_1+\dots+y_t$, a contradiction.
If $i\ge t$, then we can add the edge~$e$ to the matching to obtain a larger separated matching, again a contradiction.
Therefore, for every integer $0\le i\le k-1$, there is no edge of~$G$ in $[y_i+1,y_{i+1}-1]$.

In particular, the underlying unordered graph of $G\setminus\{y_1,\dots,y_t\}$ is $k$-partite, meaning that $e(G\setminus\{y_1,\dots,y_t\})\le e(T(n-k+1,k))$ and so $e(G)\le e(T(n-k+1,k)) + (k-1)(n-1) -\binom{k}{2}$.
It is easy to check that the right hand-side of the last inequality is precisely $T(n+1,k)-k+1$, as required.
\end{proof}

\end{document}